\newtheorem{definition}{\bfseries Definition}
\newtheorem{proposition}{\bfseries Proposition}
\newtheorem{theorem}{\bfseries Theorem}
\newtheorem{corollary}{\bfseries Corollary}
\newtheorem{remark}{\bfseries Remark}
\newcommand{\ubar}{{\bf u}}
\newcommand{\R}{\mathbb{R}}
\newcommand{\ep}{\epsilon}
\newcommand{\C}{\mathcal{C}}
\newcommand{\D}{\mathcal{D}}
\newcommand{\Kinfinity}{\mathcal{K}}
\newcommand{\Kzcbf}{K_{\mathrm{zcbf}}}
\newcommand{\Kclf}{K_{\mathrm{clf}}}
\newcommand{\Ce}{\mathcal{C}_\epsilon}
\newcommand{\xbar}{x}
\newcommand{\ubold}{{\bf u}}
\begin{document}
	\title{Robustness of Control Barrier Functions for Safety Critical Control\footnote{{\bf Correction to Theorem 3 and some typos of the paper appeared in IFAC Conference on Analysis and Design of Hybrid Systems, Atlanta, GA, USA, page 54-61, 2015.}}}
	\author{		Xiangru Xu, Paulo Tabuada, Jessy W. Grizzle, Aaron D. Ames		
		\thanks{This research is supported by NSF CPS Awards 1239055, 1239037 and 1239085.}
		\thanks{X. Xu and J. W. Grizzle are with the Department of Electrical Engineering and Computer Science, University of Michigan, Ann Arbor, MI, email:{\tt\small \{xuxiangr,grizzle\}@umich.edu}.  P. Tabuada is with the  Department of Electrical Engineering, University of California at Los Angles, Los Angles, CA, email:{\tt\small tabuada@ucla.edu}.  A. D. Ames is with the Woodruff School of Mechanical Engineering, the
			School of Electrical and Computer Engineering,  Georgia Institute of Technology, Atlanta, Georgia,  email:{\tt\small aames@gatech.edu}. }
		}	

	\date{}



	\maketitle

\thispagestyle{fancy}          
\renewcommand{\headrulewidth}{0pt}       

	\begin{abstract}                
		Barrier functions (also called certificates) have been an important tool for the verification of hybrid systems, and have also played important roles in optimization and multi-objective control. The extension of a barrier function to a controlled system results in a control barrier function. This can be thought of as being analogous to how Sontag extended Lyapunov functions to control Lyapunov functions in order to enable controller synthesis for stabilization tasks. A control barrier function enables controller synthesis for safety requirements specified by forward invariance of a set using a Lyapunov-like condition. This paper develops several important extensions to the notion of a control barrier function. The first involves robustness under perturbations to the vector field defining the system. Input-to-State stability conditions are given that provide for forward invariance, when disturbances are present, of a ``relaxation'' of set rendered invariant without disturbances. A control barrier function can be combined with a control Lyapunov function in a quadratic program to achieve a control objective subject to safety guarantees. The second result of the paper gives conditions for the control law obtained by solving the quadratic program to be Lipschitz continuous and {\color{black}therefore to gives rise to} well-defined solutions of the resulting closed-loop system. 
	\end{abstract}

\emph{Keywords:} 	Barrier function, Invariant set, Quadratic program, Robustness, Continuity

\section{Introduction}\label{sec:introduction}

Lyapunov functions are used to certify stability properties of a set without calculating the exact solution of a system. In a similar manner, barrier certificates (functions) are used to verify temporal properties (such as safety, avoidance, eventuality) of a set, without the difficult task of computing the system's reachable set; see \cite{Prajna2007siam}, \cite{prajna2007framework}. These same references show that when the vector fields of the system are polynomial and the sets are semi-algebraic, barrier certificates can be computed by sum-of-squares optimization. In the original formulation
of \cite{prajna2007framework}, all sublevel sets of the barrier certificate were required to be invariant because the derivative of the barrier certificate along solutions was required to be non-positive. This condition was relaxed by \cite{KongExpoBarrier13} and \cite{BarrierRevisited} so that tighter
over-approximations of the reachable set could be obtained, and such that more expressive barrier certificates could be synthesized using semi-definite programming. The key idea there was to only require that a single sublevel set be invariant, namely, the set of points where the barrier certificate was non-positive.

The natural extension of barrier functions to a system with control inputs is a control barrier function (CBF), first proposed by  \cite{wieland2007constructive}; this work used the original condition of a barrier function that imposes invariance of all sublevel sets. The unification of control Lyapunov functions (CLFs) with CBFs appeared at the same conference in \cite{Romd2014CDCunitiing} and \cite{aaroncbfcdc14}, using two contrasting formulations. The objective of \cite{Romd2014CDCunitiing} was to incorporate into a single feedback law the conditions required to simultaneously achieve asymptotic stability of an equilibrium point, while avoiding an unsafe set. The feedback law was constructed using Sontag's universal control formula (\cite{Sontag:universal}), provided that a  ``control Lyapunov barrier function'' inequality could be met. Importantly, if the stabilization and safety objectives were in conflict, then no feedback law could be proposed. In contrast, the approach of \cite{aaroncbfcdc14} was to pose a feedback design problem that \textit{mediates} the safety and stabilization requirements, in the sense that safety is always guaranteed, and progress toward the stabilization objective is assured when the two requirements ``are not in conflict''.

The essential difference between these two approaches is perhaps best understood through an example. A vehicle equipped with Adaptive Cruise Control (ACC) seeks to converge to and maintain a fixed cruising speed, as with a common cruise control system. Converging to and maintaining fixed speed is naturally expressed as asymptotic stabilization of a set. With ACC, the vehicle must in addition guarantee a safety condition, namely, when a slower moving vehicle is encountered, the controller must automatically reduce vehicle speed to maintain a guaranteed lower bound on time headway or following distance, where the distance to the leading vehicle is determined with an onboard radar. When the leading car speeds up or leaves the lane, and there is \textit{no longer a conflict between safety and desired cruising speed}, the adaptive cruise controller automatically increases vehicle speed. The time-headway safety condition is naturally expressible as a control barrier function. In the approach of \cite{aaroncbfcdc14}, a Quadratic Program (QP) mediates the two inequalities associated with the CLFs and CBFs; in particular, relaxation is used to make the stability objective a soft constraint while safety is maintained as a hard constraint. In this way, safety and stability do not need to be simultaneously satisfiable. On the other hand, the approach of \cite{Romd2014CDCunitiing} is only applicable when the two objectives can be simultaneously met.

A second, although less important, difference in the two approaches is that \cite{Romd2014CDCunitiing} used the more restrictive invariance condition of \cite{Prajna2007siam}, while \cite{aaroncbfcdc14} used the relaxed condition of \cite{KongExpoBarrier13}, appropriately interpreted for the type of barrier function often used in optimization, see \cite{boyd2004convex}, where the barrier function is unbounded on the boundary of the allowed set, instead of vanishing on the set boundary.

The present paper builds on previous work in two important directions. First, the robustness of barrier functions and control barrier functions under model perturbation is investigated. An Input-to-State (ISS) stability property of a safe set is established when  perturbations are present and the barrier function vanishes on the set boundary. The second result gives conditions that guarantee local Lipschitz continuity of the feedback law arising from the QP used to mediate safety and asymptotic convergence to a set. The analysis is based on the constraint qualification conditions along with the KKT conditions for optimality. While the result is applicable to the type of barrier function in \cite{aaroncbfcdc14}, it will be stated for barrier functions used in this paper that vanish on the set boundary.

The remainder of the paper is organized as follows. Section \ref{sec:barrier} defines zeroing barrier functions and zeroing control barrier functions, and establishes a robustness property to model perturbations. Section \ref{sec:LipCon} develops the conditions for the solution of the QP to be locally Lipschitz continuous in the problem data. The theory developed  is illustrated in Section \ref{sec:example} on adaptive cruise control. Section \ref{sec:conclusions} summarizes the conclusions.

\emph{Notation:} The set of real, positive real and non-negative real numbers are denoted by  $\R$, $\R^+$ and $\R^+_0$, respectively. The Euclidean norm is denoted by $\|\cdot\|$. The transpose of matrix $A$ is denoted by $A^\top$. The interior and boundary of a set $\mathcal{S}$ are denoted by $\mathrm{Int}(\mathcal{S})$ and $\partial\mathcal{S} $, respectively.
%
%
The distance from $x$ to a set $\mathcal{S}$ is denoted by {$\Vert x\Vert_\mathcal{S}=\inf_{s\in \mathcal{S}}\Vert x-s\Vert$}.
For any essentially bounded function $g:\mathbb{R}\to \mathbb{R}^n$,  the infinity norm of $g$ is denoted by $\Vert g\Vert_{\infty}=\textrm{ess}\sup_{t\in \mathbb{R}}\Vert g(t)\Vert$.


A function $f:\R^n\rightarrow\R^m$ is called \emph{Lipschitz continuous} on $I\subset \R^n$ if there exists a constant $L\in\R^+$ such that $\|f(x_2)-f(x_1)\|\leq L\|x_2-x_1\|$ for all $x_1,x_2\in I$, and called \emph{locally Lipschitz continuous} at a point $x\in\R^n$ if there exist constants $\delta\in\R^+$ and $M\in\R^+$ such that {$\|f(x)-f(x')\|\leq M\|x-x'\|$} holds for all $\|x-x'\|\leq \delta$. A continuous function {$\beta_1:[0,a)\rightarrow [0,\infty)$} for some $a>0$ is said to belong to \emph{class $\mathcal{K}$} if it is strictly increasing and $\beta_1(0)=0$.
A continuous function {$\beta_2:[0,b)\times [0,\infty)\rightarrow [0,\infty)$} for some $b>0$ is said to belong to \emph{class $\mathcal{KL}$}, if for each fixed $s$, the mapping $\beta_2(r,s)$ belongs to class $\mathcal{K}$ with respect to $r$ and for each fixed $r$, the mapping $\beta_2(r,s)$ is decreasing with  respect to $s$ and $\beta_2(r,s)\rightarrow 0$ as $s\rightarrow \infty$.

\section{Zeroing (Control) Barrier Functions}\label{sec:barrier}



The  barrier function and control barrier function considered in this paper are based on \cite{KongExpoBarrier13}, \cite{BarrierRevisited}, and \cite{wieland2007constructive}. As in  \cite{aaroncbfcdc14}, the primary focus is to establish forward invariance of a given set $\C$, which one may interpret as an under approximation of the ``initial set'' and the ``safe set''  in previous formulations of barrier functions. The main contribution of the section is a robustness property under model perturbations.


Consider a nonlinear system on $ \R^n$,
\begin{eqnarray}
\label{eqn:dynamicalsystem}
\dot{x} = f(x),
\end{eqnarray}
with $f$ locally Lipschitz continuous. Denote by $x(t,x_0)$ the solution of~\eqref{eqn:dynamicalsystem} with initial condition $x_0\in\mathbb{R}^n$. To simplify notation, the solution is also denoted by $x(t)$ whenever the initial condition does not play an important role in the discussion. The \emph{maximal interval of existence} of $x(t,x_0)$ is denoted by $I(x_0)$. When $I(x_0)=\mathbb{R}_0^+$ for any $x_0\in \mathbb{R}^n$, the differential equation~\eqref{eqn:dynamicalsystem} is said to be \emph{forward complete}. A set $\mathcal{S}$ is called {\it forward invariant} if for every $x_0 \in \mathcal{S}$, $x(t,x_0) \in \mathcal{S}$ for all $t \in I(x_0)$.

For $\epsilon\geq 0$, define the family of closed sets $\Ce$ as
\begin{eqnarray}
\label{eqn:superlevelsetC}
\Ce &=& \{ x \in \R^n : h(x) \geq -\ep\}, \\
\label{eqn:superlevelsetC2}
\partial \Ce &=& \{ x \in \R^n : h(x) = -\ep\}, \\
\label{eqn:superlevelsetC3}
\mathrm{Int}(\Ce) &=& \{ x \in \R^n : h(x) > -\ep\},
\end{eqnarray}
where {$h: \R^n \to \R$} is a continuously differentiable function. By construction, $\C_{\ep_1}\subset\C_{\ep_2}$ for any $\ep_2>\ep_1\geq 0$. For simplicity, the set $\C_0$ is denoted by $\C$.

The definition of a barrier function is made easier through an appropriate extension of the notion of
class $\mathcal{K}$ function.

\begin{definition}\label{def:extend} (Based on \cite{KHALIL01})  A continuous function {$\beta:(-b,a)\rightarrow (-\infty,\infty)$} for some $a,b>0$ is said to belong to \emph{extended class $\mathcal{K}$} if it is strictly increasing and $\beta(0)=0$.
\end{definition}

%

\subsection{Zeroing Barrier Functions}

The class of barrier functions considered in this paper is defined as follows.

\begin{definition}
	\label{def:barrierfunctions2}
	Consider a  dynamical system \eqref{eqn:dynamicalsystem} and the set $\C$ defined by \eqref{eqn:superlevelsetC}-\eqref{eqn:superlevelsetC3} for some continuously differentiable function $h: \R^n\rightarrow \R$. If there exist a locally Lipschitz extended class $\Kinfinity$ function $\alpha$ and a set $\D$ with $\C\subseteq\mathcal{D}\subset \R^n$ such that
	\begin{align}
	L_f h(x)& \geq  -\alpha(h(x)),\forall \; x \in \D,\label{eqn:generalinequality3}
	\end{align}
	then the function $h$ is called a \emph{zeroing barrier function (ZBF)}.
\end{definition}

Existence of a ZBF implies the forward invariance of $\mathcal{C}$, as shown by the following theorem.

\begin{theorem}\label{theorem:GBF}
	Given a dynamical system \eqref{eqn:dynamicalsystem} and a set $\C$ defined by \eqref{eqn:superlevelsetC}-\eqref{eqn:superlevelsetC3} for some continuously differentiable function $h: \R^n\rightarrow \R$, if $h$ is a ZBF defined on the set $\D$ with $\C\subseteq\mathcal{D}\subset \R^n$, then $\C$ is forward invariant.
\end{theorem}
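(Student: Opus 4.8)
The plan is to convert the set-invariance statement into a one-dimensional differential inequality along trajectories and then close it with a comparison argument. Fix $x_0 \in \C$ and set $y(t) = h(x(t,x_0))$. Since $h$ is continuously differentiable and $f$ is locally Lipschitz, $y$ is differentiable along the solution with $\dot y(t) = L_f h(x(t))$. As long as $x(t) \in \D$, the defining inequality \eqref{eqn:generalinequality3} yields the scalar differential inequality $\dot y(t) \geq -\alpha(y(t))$, where $\alpha$ is the locally Lipschitz extended class $\mathcal{K}$ function of Definition \ref{def:barrierfunctions2}, and the initial datum satisfies $y(0) = h(x_0) \geq 0$.

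Next I would study the scalar comparison system $\dot z = -\alpha(z)$ with $z(0) = y(0) \geq 0$. Because $\alpha$ is locally Lipschitz this initial value problem has a unique solution, and because $\alpha(0)=0$ the constant $z \equiv 0$ is an equilibrium. Since $\alpha(s) > 0$ for $s > 0$, the solution is non-increasing; moreover local Lipschitzness gives $\alpha(s) \le L s$ near the origin, so $z(t) \geq z(0)e^{-Lt} > 0$ whenever $z(0) > 0$, meaning $z$ cannot reach the equilibrium in finite time. Hence $z(t) \geq 0$ for all $t \geq 0$ (with $z \equiv 0$ precisely when $x_0 \in \partial\C$). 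Invoking the comparison lemma (as in \cite{KHALIL01}) on $\dot y \geq -\alpha(y)$ then gives $h(x(t)) = y(t) \geq z(t) \geq 0$, i.e.\ $x(t)\in\C$, on every interval where \eqref{eqn:generalinequality3} applies.

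The main obstacle is that \eqref{eqn:generalinequality3} holds only on $\D$, not a priori along the whole trajectory, so the comparison bound and the containment $x(t)\in\C\subseteq\D$ must be secured simultaneously rather than one after the other. I would resolve this with a bootstrap on the time spent in $\D$: assuming $\D$ is a neighborhood of $\C$, let $\tau^\ast = \sup\{\tau\in I(x_0): x(t)\in\D \text{ for all } t\in[0,\tau]\}$, which is positive since $x_0\in\D$. On $[0,\tau^\ast)$ the inequality is valid, so the comparison estimate forces $h(x(t))\geq z(t)\geq 0$, whence $x(t)\in\C\subseteq\D$ throughout; were $\tau^\ast < \sup I(x_0)$, continuity would place $x(\tau^\ast)$ in the closed set $\C\subseteq\D$ and openness of $\D$ would let the trajectory stay in $\D$ slightly longer, contradicting maximality. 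Thus $x(t)\in\C$ for all $t\in I(x_0)$, which is forward invariance. The genuinely delicate case is when $\D$ is taken to be only $\C$ and $x_0\in\partial\C$ (so $z\equiv 0$): there the inequality reduces to $\dot y(0)\geq 0$ at the boundary, and ruling out an immediate higher-order exit requires a Nagumo-type tangency argument in place of the openness step above.
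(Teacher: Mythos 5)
Your proof is correct under its own working assumptions, but it takes a genuinely different route from the paper. The paper's entire argument is two lines: on $\partial\C$ one has $h(x)=0$, hence $L_f h(x)\geq -\alpha(0)=0$, and Nagumo's theorem (cited from \cite{BlanchiniBook08}) then yields forward invariance of the closed set $\C$. In other words, the paper uses inequality \eqref{eqn:generalinequality3} only on the boundary, and neither the shape of $\alpha$ nor its Lipschitz continuity plays any role. Your comparison-lemma argument instead exploits the inequality on all of $\D$ and leans on the local Lipschitz continuity of $\alpha$ (so that the scalar flow $\dot z=-\alpha(z)$ cannot reach the equilibrium $z=0$ in finite time); in exchange it delivers more than bare invariance, namely the quantitative lower bound $h(x(t))\geq z(t)$ and, when run from initial conditions in $\D\setminus\C$, essentially the asymptotic stability of $\C$ that the paper only obtains later through the Lyapunov function $V_\C$ of Proposition~\ref{Prop:SetStability}. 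The hypotheses traded are also different: your bootstrap needs $\D$ to be a neighborhood of $\C$, which the statement does not guarantee (it allows $\D=\C$), and you correctly flag that in that degenerate case one must fall back on precisely the Nagumo tangency argument that constitutes the paper's whole proof; conversely, the paper's passage from ``$L_f h\geq 0$ on $\partial\C$'' to Nagumo's sub-tangentiality condition silently requires $\nabla h$ not to vanish on $\partial\C$, an assumption your argument never needs. Neither proof is hypothesis-free, but both are sound under the standard regularity assumptions, and yours is the more informative of the two.
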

\begin{proof}
	Note that for any $x \in \partial\C$, $L_f h(x)\geq -\alpha(h(x))=0$. According to Nagumo's theorem (\cite{BlanchiniBook08}), the set $\C$ is forward invariant.\hfill$\Box$
\end{proof}

Recall that  the original barrier condition in \cite{prajna2007framework} requires that $\dot{h}\geq 0$, when expressed in the notation of the present paper, which implies that all superlevel sets of $h$ inside $\C$
are  invariant. As in \cite{BarrierRevisited}, \cite{KongExpoBarrier13} and \cite{aaroncbfcdc14}, inequality \eqref{eqn:generalinequality3}  relaxes the conventional condition by requiring a single superlevel set of $h$, which is $\C$ itself, to be invariant.

%

\subsection{Robustness Properties of ZBFs}

In this section, the extent to which forward invariance of the set $\C$, asserted in Theorem~\ref{theorem:GBF}, is robust with respect to different perturbations on the dynamics~\eqref{eqn:dynamicalsystem} is investigated. This will be accomplished by showing that existence of a ZBF implies asymptotic stability of the set $\C$.

Recall that a closed and forward invariant set $\mathcal{S}\subseteq \mathbb{R}^n$ is said to be locally asymptotically stable for a forward complete system~\eqref{eqn:dynamicalsystem} if there exist an open set $\mathcal{R}$ containing $\mathcal{S}$ and a class $\mathcal{KL}$ function $\beta$ such that for any $x_0\in \mathcal{R}$
\begin{equation}
\label{Eq:SetStability}
\Vert x(t,x_0)\Vert_{\mathcal{S}}\le \beta\left(\Vert x_0\Vert_{\mathcal{S}},t\right).
\end{equation}
Whenever the set $\mathcal{S}$ is compact, inequality~\eqref{Eq:SetStability} implies $I(x_0)=\mathbb{R}_0^+$ for all $x_0\in \mathcal{R}$. Therefore, the forward completeness assumption on~\eqref{eqn:dynamicalsystem} is no longer needed. Note that asymptotic stability of $\mathcal{S}$ implies invariance of $\mathcal{S}$ as can be seen by noting that $x_0\in\mathcal{S}$ implies $\Vert {\color{black}x_0}\Vert_\mathcal{S}=0$ and $\beta(\Vert {\color{black}x_0}\Vert_\mathcal{S},t)=0$ which, in turn, implies $\Vert x(t,x_0)\Vert_\mathcal{S}{\color{black}=} 0$ and $x(t,x_0)\in \mathcal{S}$.

Once asymptotic stability of $\C$ is established, several robustness results in the literature will be used to characterize the robustness of forward invariance of the set $\C$. The critical observation, upon which all the results in this section rely, is that, if $\mathcal{D}$ is open, then a ZBF $h$ induces a Lyapunov function $V_\C:\mathcal{D}\to \mathbb{R}_0^+$ defined by:
\begin{equation}
\label{Eq:V}
V_\C(x)=\left\{\begin{array}{ccc}
0, & \text{if} & x\in \C,\\
-h(x), & \text{if} & x\in\mathcal{D}\backslash\C.\end{array}\right.
\end{equation}
It is easy to see that: \textbf{1)} $V_\C(x)=0$ for $x\in \C$; \textbf{2)} $V_\C(x)>0$ for $x\in\mathcal{D}\backslash\C$; and \textbf{3)} $L_f V_\C(x)$ satisfies the following inequality for $x\in\mathcal{D}\backslash \C$:
$$L_f V_\C(x) =  -L_f h(x)\le\alpha\circ h(x) = \alpha(-V_\C(x))<0,$$
where $\alpha$ is the locally Lipschitz extended class $\Kinfinity$ function introduced in Definition \ref{def:barrierfunctions2}.
It thus follows from these three properties, from the fact that $V_\C$ is continuous on its domain and continuously differentiable at every point $x\in\mathcal{D}\backslash \C$, and from\footnote{While Theorem 2.8 requires the function $V$ to be smooth, $V$ can always be smoothed as shown in Proposition 4.2 in~\cite{SmoothConverse96}.} Theorem 2.8 in~\cite{SmoothConverse96} that the set $\C$ is asymptotically stable whenever~\eqref{eqn:dynamicalsystem} is forward complete or the set $\C$ is compact. The preceding discussion is summarized in the following result.

\begin{proposition}
	\label{Prop:SetStability}
	Let $h:\mathcal{D}\to\mathbb{R}$ be a continuously differentiable function defined on an open set $\mathcal{D}\subseteq\mathbb{R}^n$. If $h$ is a ZBF for the dynamical system~\eqref{eqn:dynamicalsystem}, then the set $\C$ defined by $h$ is asymptotically stable. Moreover, the function $V_\C$ defined in \eqref{Eq:V} is a Lyapunov function.
\end{proposition}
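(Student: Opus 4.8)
The plan is to exhibit the function $V_\C$ defined in \eqref{Eq:V} as a Lyapunov function certifying asymptotic stability of $\C$, and then to invoke a Lyapunov sufficiency result for set stability. First I would verify the three defining properties relative to $\C$. That $V_\C$ vanishes exactly on $\C$ and is strictly positive on $\mathcal{D}\backslash\C$ is immediate from \eqref{Eq:V} together with the facts that $h=0$ on $\partial\C$ and $h<0$ off $\C$. For the decrease property I would compute, for $x\in\mathcal{D}\backslash\C$, that $L_f V_\C(x)=-L_f h(x)\le\alpha(h(x))=\alpha(-V_\C(x))$ using the ZBF inequality \eqref{eqn:generalinequality3}, and then note that $h(x)<0$ off $\C$ forces $\alpha(h(x))<\alpha(0)=0$ because $\alpha$ is a strictly increasing extended class $\Kinfinity$ function with $\alpha(0)=0$. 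I would also record that $V_\C$ is positive definite with respect to $\C$, i.e.\ bounded above and below by class $\Kinfinity$ functions of the distance $\Vert x\Vert_\C$ on a neighborhood of $\C$; this follows from continuity of $V_\C$ and the fact that it vanishes precisely on $\C$. These observations already establish the ``moreover'' claim that $V_\C$ is a Lyapunov function.

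The main obstacle is regularity at the boundary. Although $V_\C$ is continuous throughout $\mathcal{D}$ and continuously differentiable on the open set $\mathcal{D}\backslash\C$, it generally fails to be differentiable across $\partial\C$, where it transitions from the constant $0$ on $\C$ to $-h$ off $\C$. Since the Lyapunov sufficiency result I intend to use (Theorem 2.8 in \cite{SmoothConverse96}) is stated for smooth $V$, I would first replace $V_\C$ by a smooth function with the same sign pattern and strict-decrease property, as permitted by the smoothing construction of Proposition 4.2 in \cite{SmoothConverse96}. The delicate point here is checking that smoothing preserves the strict decrease on a neighborhood of $\C$, since the decrease rate $\alpha(h(x))\to 0^-$ as $x\to\partial\C$ from outside and is only nonstrict on $\C$ itself; one must ensure the smoothed function still satisfies $L_f V<0$ off $\C$ rather than merely $\le 0$.

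With a smooth Lyapunov function in hand, Theorem 2.8 in \cite{SmoothConverse96} yields a class $\KLinfinity$ estimate of the form \eqref{Eq:SetStability} on an open neighborhood $\mathcal{R}\subseteq\mathcal{D}$ of $\C$, which is exactly local asymptotic stability of $\C$. Finally, to make this estimate meaningful I would treat the two stated cases separately: either \eqref{eqn:dynamicalsystem} is forward complete, so that $x(t,x_0)$ is defined on $\R_0^+$; or $\C$ is compact, in which case, as already noted after \eqref{Eq:SetStability}, compactness together with the $\KLinfinity$ bound confines trajectories started near $\C$ to a bounded set and forces $I(x_0)=\R_0^+$, so forward completeness need not be assumed. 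This completes the argument for the asymptotic stability assertion.
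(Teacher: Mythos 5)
Your proposal follows essentially the same route as the paper: verify that $V_\C$ vanishes on $\C$, is positive on $\mathcal{D}\backslash\C$, and satisfies $L_f V_\C(x)\le\alpha(-V_\C(x))<0$ off $\C$, then invoke Theorem 2.8 of \cite{SmoothConverse96} after smoothing via its Proposition 4.2, with the forward-completeness/compactness dichotomy handled exactly as in the text preceding the proposition. Your explicit flagging of the need to preserve strict decrease under smoothing is a point the paper relegates to a footnote, but the argument is the same.
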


The relationships between asymptotic stability and different robustness properties are well documented in the literature. For the reader's benefit, the following proposition paraphrases several existing results using the notation of this paper.

\begin{proposition}
	\label{Prop:Robust}
	Under the assumptions of Proposition~\ref{Prop:SetStability} the following statements hold:
	\begin{itemize}
		\item There exist $\varepsilon\in \mathbb{R}^+_0$ and class $\mathcal{K}$ function {$\sigma:[0,\varepsilon]\to \mathbb{R}_0^+$} such that for any continuous function $g_1:\mathbb{R}^n\to\mathbb{R}^n$ satisfying $\Vert g_1(x)\Vert\le \sigma\left(\Vert x\Vert_{\C}\right)$ for $x\in \mathcal{D}\backslash\mathrm{Int}(\mathcal{C})$, the set $\C$ is still asymptotically stable for the system $\dot{x}=f(x)+g_1(x)$ describing the effect of a disturbance modeled by $g_1$ on system \eqref{eqn:dynamicalsystem}.
		\item There exist a constant $k\in\mathbb{R}^+$ and class $\mathcal{K}$ function $\gamma$ such that the set $\mathcal{C}_{\gamma\left(\Vert g_2\Vert_\infty\right)}\subseteq \mathcal{D}$ is locally asymptotically stable for the system $\dot{x}=f(x)+g_2(t)$ describing the effect of a  disturbance modeled by $g_2$, and satisfying $\Vert g_2\Vert_\infty\le k$, on system~\eqref{eqn:dynamicalsystem}.
	\end{itemize}
\end{proposition}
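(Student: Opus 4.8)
The plan is to derive both statements directly from Proposition~\ref{Prop:SetStability}, which already guarantees that $\C$ is asymptotically stable and that the function $V_\C$ in~\eqref{Eq:V} is a Lyapunov function for the unperturbed system, satisfying the strict decrease $L_f V_\C(x)\le\alpha(-V_\C(x))<0$ on $\D\backslash\C$. Once asymptotic stability of $\C$ is available, each item is an instance of a standard robustness theorem for asymptotically stable (not necessarily compact) sets, and my task reduces to verifying the hypotheses of those theorems in the present notation and then invoking them.

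For the first item I would appeal to the classical robustness of asymptotic stability under vanishing, state-dependent perturbations (total stability). Differentiating $V_\C$ along the perturbed field gives
$$L_{f+g_1}V_\C(x)=L_f V_\C(x)+\N V_\C(x)^\top g_1(x)\le \alpha(-V_\C(x))+\Vert\N V_\C(x)\Vert\,\Vert g_1(x)\Vert,$$
and the point is that on $\D\backslash\IntC$ the drift $\alpha(-V_\C(x))$ is strictly negative and bounded away from zero relative to the distance to $\C$. Hence if $\Vert g_1(x)\Vert$ is dominated by a sufficiently small class $\Kinfinity$ function $\sigma$ of $\Vert x\Vert_\C$, the perturbation term cannot overcome the negative drift, so $V_\C$ remains a strict Lyapunov function for $\dot x=f(x)+g_1(x)$ and $\C$ stays asymptotically stable. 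The constant $\varepsilon$ simply restricts attention to a neighborhood of $\C$ on which these comparisons and the domain inclusion into $\D$ are valid.

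For the second item I would invoke the local input-to-state stability property that accompanies asymptotic stability of a set admitting a Lyapunov function: there exist a class $\KLinfinity$ function $\beta$ and a class $\Kinfinity$ function such that, for $\Vert g_2\Vert_\infty$ below a threshold $k$, solutions of $\dot x=f(x)+g_2(t)$ obey an estimate of the form $\Vert x(t,x_0)\Vert_\C\le\beta(\Vert x_0\Vert_\C,t)+\gamma(\Vert g_2\Vert_\infty)$. This drives trajectories into the neighborhood $\{x:\Vert x\Vert_\C\le\gamma(\Vert g_2\Vert_\infty)\}$ of $\C$ and yields local asymptotic stability of that neighborhood, with $k$ chosen small enough that the neighborhood remains inside $\D$.

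The main obstacle I anticipate is not the stability estimates themselves, which are off-the-shelf once Proposition~\ref{Prop:SetStability} is in place, but rather recasting the attracting neighborhood in the level-set form $\mathcal{C}_{\gamma(\Vert g_2\Vert_\infty)}$ required by the statement. The ISS estimate naturally produces the distance-based neighborhood $\{x:\Vert x\Vert_\C\le\rho\}$, whereas $\Ce=\{x:h(x)\ge-\ep\}$ is a sublevel set of $-h=V_\C$. Reconciling the two requires a local comparison between $\Vert x\Vert_\C$ and $-h(x)=V_\C(x)$ near $\partial\C$, which is available because $V_\C$, being a Lyapunov function, is sandwiched between class $\Kinfinity$ functions of the distance to $\C$. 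I would absorb this change of variables into the definition of $\gamma$ and shrink $k$ as needed to keep $\mathcal{C}_{\gamma(\Vert g_2\Vert_\infty)}\subseteq\D$.
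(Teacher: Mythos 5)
Your proposal matches the paper's treatment: the paper gives no independent proof of this proposition, stating only that the first item corresponds to Theorem~2.8 of the cited converse-Lyapunov-function reference (robustness to vanishing, state-dependent perturbations) and the second to local input-to-state stability of $\dot{x}=f(x)+u$ with $u$ viewed as a disturbance, both applied to the Lyapunov function $V_\C$ furnished by Proposition~\ref{Prop:SetStability}. You invoke exactly these two standard results and additionally sketch why their hypotheses hold and how the distance-based ISS neighborhood is converted into the sublevel set $\mathcal{C}_{\gamma(\Vert g_2\Vert_\infty)}$, which is a correct filling-in of details the paper leaves implicit.
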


The first result in Proposition~\ref{Prop:Robust} corresponds to Theorem~2.8 in~\cite{LyapunovFunctions}. A disturbance satisfying the inequality $\Vert g_1(x)\Vert\le \sigma\left(\Vert x\Vert_{\C}\right)$ is called a vanishing perturbation since its magnitude decreases as the state $x$ approaches the set $\mathcal{C}$ and it vanishes on the boundary of $\mathcal{C}$. For this type of perturbation, the set $\C$ remains invariant. Moreover, even if a disturbance pushes the state into $\mathcal{D}\backslash\C$, the set $\C$ is asymptotically reached.

The second result in Proposition~\ref{Prop:Robust} corresponds to the observation that the system $\dot{x}=f(x)+u$ is locally input-to-state stable when $u$ is seen as a disturbance input. In this case, the disturbance $u(t)=g_2(t)$ is called a non-vanishing perturbation and the only assumption is that it is sufficiently small, in the sense that $\Vert g_2\Vert_\infty\le k$. 
Note that the ``size'' of the new asymptotically stable set $\mathcal{C}_{\gamma\left(\Vert g_2\Vert_\infty\right)}$, as measured by $\gamma\left(\Vert g_2\Vert_\infty\right)$, is an increasing function of the disturbance bound $\Vert g_2\Vert_\infty$.
Similarly to vanishing perturbations, if a disturbance pushes the state into $\mathcal{D}\backslash\mathcal{C}_{\gamma\left(\Vert g_2\Vert_\infty\right)}$, the set $\mathcal{C}_{\gamma\left(\Vert g_2\Vert_\infty\right)}$ is asymptotically reached.

\subsection{Zeroing Control Barrier Functions}\label{subsec:ZCBF}
Consider an affine control system of the form
\begin{eqnarray}
\label{eqn:controlsys}
\dot{x} = f(x) + g(x) u,
\end{eqnarray}
with $f$ and $g$ locally Lipschitz continuous, $x \in \R^n$ and $u \in U \subset \R^m$.

\begin{definition}\label{dfn:newcbf}
	Given a set $\mathcal{C} \subset \R^n$ defined by \eqref{eqn:superlevelsetC}-\eqref{eqn:superlevelsetC3} for a continuously differentiable function $h: \R^n \to \R$,  {\color{black}the function $h$ is called} a \emph{zeroing control barrier function (ZCBF)} defined on   set $\mathcal{D}$ with $\C\subseteq\mathcal{D}\subset \R^n$, if  there exists an extended class $\Kinfinity$ function $\alpha$ such that
	\begin{align}\label{ineq:ZCBF}
	& \sup_{u \in U}  \left[ L_f h(x) + L_g h(x) u + \alpha(h(x))\right] \geq 0,\;\forall x \in \mathcal{D}.
	\end{align}
	The ZCBF $h$ is said to be locally Lipschitz continuous if $\alpha$ and the derivative of $h$ are both locally Lipschitz continuous.
\end{definition}

If $U=\R^m$ and $L_gh(x) \ne 0$ for $x\in\mathcal{D}$, then the function $h$ is always a ZCBF.

Given a ZCBF $h$, define the set for all $x\in\D$
\begin{equation}\label{zcbfinputset}
\Kzcbf(x) =  \{ u \in U : L_f h(x) + L_g h(x) u + \alpha(h(x)) \geq 0\}. \nonumber
\end{equation}

Similar to Corollary 1 in \cite{aaroncbfcdc14}, the following result that guarantees the forward invariance of $\mathcal{C}$ can be given.

\begin{corollary}\label{cor:zbf}
	Given a set $\mathcal{C} \subset \R^n$ defined by \eqref{eqn:superlevelsetC}-\eqref{eqn:superlevelsetC3} for a continuously differentiable function $h$, if $h$ is a ZCBF on $\D$, then any Lipschitz continuous controller $u: \mathcal{D} \to U$ such that $u(x) \in \Kzcbf(x)$ will render the set $\mathcal{C}$ forward invariant.
\end{corollary}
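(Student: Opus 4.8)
The plan is to reduce the corollary to Theorem~\ref{theorem:GBF} by substituting the chosen feedback law into the control system and treating the closed loop as an autonomous system of the form \eqref{eqn:dynamicalsystem}. First I would define the closed-loop vector field $\Fcl(x) := f(x) + g(x) u(x)$. Because $f$ and $g$ are locally Lipschitz and $u$ is Lipschitz continuous by hypothesis, each factor in $g(x) u(x)$ is locally Lipschitz and locally bounded, so their product, and hence $\Fcl$, is locally Lipschitz on $\D$. This guarantees that the closed-loop system $\dot{x} = \Fcl(x)$ admits unique solutions, which is the regularity needed to invoke the invariance machinery.

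Next I would check that $h$ is a ZBF for the closed-loop system in the sense of Definition~\ref{def:barrierfunctions2}. Since $u(x) \in \Kzcbf(x)$ for every $x \in \D$, the defining inequality of $\Kzcbf$ yields
$$L_{\Fcl} h(x) = L_f h(x) + L_g h(x) u(x) \geq -\alpha(h(x)), \quad \forall\, x \in \D,$$
which is exactly inequality \eqref{eqn:generalinequality3} with $f$ replaced by $\Fcl$. Thus $h$ qualifies as a ZBF for $\dot{x} = \Fcl(x)$ defined on $\D$, and Theorem~\ref{theorem:GBF} immediately gives forward invariance of $\C$ under the closed-loop dynamics. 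Unwinding Theorem~\ref{theorem:GBF}, the conclusion rests on Nagumo's theorem: on $\partial\C$ one has $h(x)=0$, so the inequality above becomes $L_{\Fcl} h(x) \geq 0$, meaning the closed-loop field is subtangential to $\C$ along its boundary, and uniqueness of trajectories converts this subtangentiality into forward invariance.

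I expect the only real obstacle to be the regularity of $\Fcl$, since Nagumo's theorem in the invariance (as opposed to mere viability) form requires uniqueness of solutions; this is precisely why the statement assumes the controller to be Lipschitz continuous, and it must be combined with the local Lipschitzness of $f$ and $g$ to close the argument. A secondary subtlety worth flagging is that Definition~\ref{def:barrierfunctions2} asks for a locally Lipschitz $\alpha$ while Definition~\ref{dfn:newcbf} only posits an extended class $\Kinfinity$ function $\alpha$; however, the Nagumo argument evaluates $\alpha$ only at $h(x)=0$, where $\alpha(0)=0$, so this discrepancy does not affect the reduction and the invocation of Theorem~\ref{theorem:GBF} remains valid.
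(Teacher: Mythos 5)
Your proposal is correct and follows essentially the same route the paper intends: the paper gives no explicit proof of Corollary~\ref{cor:zbf} (it cites the analogous corollary in \cite{aaroncbfcdc14}), and the implied argument is exactly your reduction --- substitute the Lipschitz controller to get a locally Lipschitz closed-loop vector field, observe that $u(x)\in\Kzcbf(x)$ makes $h$ a ZBF for the closed loop, and invoke Theorem~\ref{theorem:GBF} via Nagumo's theorem. Your remark that the local Lipschitzness of $\alpha$ required by Definition~\ref{def:barrierfunctions2} but not by Definition~\ref{dfn:newcbf} is immaterial here (since the Nagumo argument only uses $\alpha(0)=0$) is a fair and accurate observation.
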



Inspired by the pointwise minimum-norm controller in \cite{FreemanSIAM96} for rendering a control Lyapunov function negative definite, consider a control input of minimum norm that meets the control barrier function inequality in \eqref{ineq:ZCBF}. When the norm arises from an inner product, the resulting controller is the solution of a quadratic program (QP). The QP perspective is especially interesting because it allows the unification of performance and safety (\cite{aaroncbfcdc14}). Specifically, the inequality for a control Lyapunov function (CLF) can be added as an additional soft constraint via a relaxation parameter, while the control barrier function inequality is maintained as a hard constraint for guaranteed safety. The question arises, however, is such a feedback law locally Lipschitz continuous?  Conditions that ensures local Lipschitz continuity will be discussed in the next section.

\section{Lipschitz Continuity of a Quadratic Program for Safety and Performance}\label{sec:LipCon}


The main result of this section provides sufficient conditions for  a QP-based feedback controller to be locally Lipschitz continuous, as required in Corollary 1 of \cite{aaroncbfcdc14} and Corollary \ref{cor:zbf} in Subsection \ref{subsec:ZCBF}. It will be assumed throughout this section that $U=\R^m$.

\subsection{Quadratic Program Only With the Control Barrier Constraint}

For an affine control system \eqref{eqn:controlsys} and a set $\C \subset \R^n$ defined by \eqref{eqn:superlevelsetC}-\eqref{eqn:superlevelsetC3}, consider the set of controllers $u(x)\in\Kzcbf(x)$ meeting the control barrier function condition in \eqref{ineq:ZCBF}. The controller that pointwise minimizes the Euclidean norm can be found by solving the following parameterized quadratic program 
\begin{align}
\mathcal{P}_1(x):&\quad \forall~x\in\D,~ \nonumber \\
&\quad u^*(x) =  \underset{u\in\R^{m}}{\operatorname{argmin}}   \;
u^\top u, \nonumber\\
&\quad \mathrm{s.t.}  \quad L_g h(x) u+L_f h(x) +\alpha(h(x))  \geq 0, \label{QP:ZCBF2}
\end{align}
where $u\in\R^m$ is the control input and constraint \eqref{QP:ZCBF2} is the ZCBF condition shown in \eqref{ineq:ZCBF}.


The following result establishes  the key condition for $u^\ast(x)$ to be locally Lipschitz continuous: the control barrier function should be relative degree one uniformly on $\D$ in the sense that $L_g h$ does not vanish on $\D$.

\begin{theorem}\label{prop:QPLip}
	Assume that vector fields $f$ and $g$ in the control system \eqref{eqn:controlsys} are both locally Lipschitz continuous, and that $h:\D \to \R$ is a locally Lipschitz continuous ZCBF. Suppose furthermore that the relative degree one condition, $L_g h(x) \neq 0$ for all $x \in \D$, holds. Then the solution, $u^*(x)$, of $\mathcal{P}_1(x)$ is locally Lipschitz continuous for $x \in \D$.
\end{theorem}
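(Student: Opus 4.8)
The plan is to exploit the fact that, for each fixed $x$, $\mathcal{P}_1(x)$ is a strictly convex quadratic program subject to a single affine inequality constraint, so that its optimizer admits a closed-form expression. Write $a(x) := L_g h(x) \in \R^{1\times m}$ for the constraint row vector and $\psi(x) := L_f h(x) + \alpha(h(x))$ for the affine offset, so the constraint \eqref{QP:ZCBF2} reads $a(x)\,u + \psi(x) \ge 0$. The objective $u^\top u$ is strictly convex, hence the minimizer is unique; moreover, since $a(x) = L_g h(x) \neq 0$ on $\D$ by the relative degree one hypothesis, the gradient (in $u$) of the single constraint is nonzero, so the linear independence constraint qualification holds and the KKT conditions are both necessary and sufficient. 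I would therefore solve them directly rather than invoking a general parametric sensitivity result.

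The second step extracts the closed form from the KKT system. Stationarity gives $u = \tfrac{1}{2}\lambda\, a(x)^\top$ for a multiplier $\lambda \ge 0$, and complementary slackness splits into two cases: if $\psi(x) \ge 0$ then $u = 0$ is feasible and optimal; if $\psi(x) < 0$ the constraint is active, forcing $\lambda = -2\psi(x)/\|a(x)\|^2 > 0$ and
\[
u^*(x) = -\frac{\psi(x)}{\|a(x)\|^2}\, a(x)^\top .
\]
Both branches combine into the single expression
\[
u^*(x) = \frac{\max\{0,\,-\psi(x)\}}{\|a(x)\|^2}\, a(x)^\top ,
\]
which is manifestly continuous across the switching surface $\{\psi(x)=0\}$ because the active-constraint branch evaluates to $0$ there.

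The final step is to verify that each factor of this expression is locally Lipschitz. Fix $x_0 \in \D$ and a compact neighborhood $K \subset \D$ of it. Since $h$ is a locally Lipschitz ZCBF, both $\alpha$ and the derivative of $h$ are locally Lipschitz; together with the local Lipschitz continuity of $f$ and $g$, this makes $L_f h$ and $L_g h$ locally Lipschitz, as products of locally Lipschitz (hence locally bounded on $K$) functions. Consequently $\psi = L_f h + \alpha\circ h$ is locally Lipschitz (using that a $C^1$ function $h$ is itself locally Lipschitz), and since $\max\{0,\cdot\}$ is globally $1$-Lipschitz, so is $\max\{0,-\psi\}$. The relative degree one assumption enters decisively through the denominator: $\|a(x)\|^2 = L_g h(x)\,(L_g h(x))^\top$ is locally Lipschitz and, because $L_g h$ does not vanish on the compact set $K$, is bounded below there by a strictly positive constant, so $1/\|a(\cdot)\|^2$ is locally Lipschitz on $K$. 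As $a(x)^\top$ is also locally Lipschitz, and a product of locally Lipschitz, locally bounded functions is locally Lipschitz, the claim follows.

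I expect the main obstacle to be the non-smoothness of $u^*$ at the switching surface $\{\psi(x)=0\}$, where the optimizer transitions from the inactive branch $u^*=0$ to the active-constraint branch. The $\max\{0,\cdot\}$ representation is exactly what resolves this: it shows that no Lipschitz constant blows up at the boundary, since $\max\{0,\cdot\}$ remains Lipschitz despite being non-differentiable at the origin. The only other point requiring care is that the statements ``product and reciprocal of locally Lipschitz functions are locally Lipschitz'' rely on local boundedness of the factors and on the denominator staying away from zero --- both secured by restricting to the compact neighborhood $K$ and by the relative degree one hypothesis $L_g h \neq 0$, respectively.
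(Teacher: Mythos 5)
Your proposal is correct and follows essentially the same route as the paper: establish LICQ from $L_g h \neq 0$, solve the KKT system in closed form, and express the optimizer as a product/composition of locally Lipschitz pieces (your $\max\{0,-\psi(x)\}/\|a(x)\|^2 \cdot a(x)^\top$ is exactly the paper's $\omega_1(\omega_2(x))\,\omega_3(x)$ decomposition). The supporting facts you invoke about products, reciprocals bounded away from zero, and compositions of locally Lipschitz functions are the same ones the paper records as Facts 1 and 2.
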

\begin{proof}
	Because $L_gh(x)\neq 0$ for $x\in \D$, the linear independent constraint qualification condition is satisfied (\cite{bertsekas1999nonlinear}). Hence, the KKT optimality conditions imply there exists $\mu(x)\geq 0$ such that $u^*(x)$ and $\mu(x)$ satisfy
	\begin{equation*}
	\begin{cases}
	{u^*(x)}^\top =\mu(x) L_gh(x), \\
	L_f h(x) + L_g h(x)u^*(x) +\alpha(h(x))\geq 0,\\
	\mu(x)=0\;\mbox{if}\; L_f h(x) + L_g h(x)u^*(x) +\alpha(h(x))>0.
	\end{cases}
	\end{equation*}
	Because the objective is convex and the inequality constraints are affine, the KKT necessary conditions are also sufficient (pg.~244 in \cite{boyd2004convex}). Hence, the closed form expression for $u^*(x)$ can be derived as
	\begin{equation*}
	u^*(x)=\left\{\begin{array}{l}
	0,\quad\mbox{if}\;L_f h(x)+\alpha(h(x))>0,\\
	-\frac{(L_f h(x) + \alpha(h(x)))L_gh(x)^\top }{L_gh(x)L_gh(x)^\top },\quad\mbox{otherwise.}
	\end{array}\right.
	\end{equation*}
	
	The following facts about Lipschitz continuous functions are recalled.
	
	\emph{Fact 1.}
	If $f_1$ and $f_2$ are locally Lipschitz continuous on a set $I$, then whenever their sum, $f_1+f_2$, or product, $f_1f_2$, makes sense,  they are each locally Lipschitz continuous on $I$. Furthermore, if $f_3$ is real valued, then in a neighborhood of any point $x\in I$ where $f_3(x) \neq0$, the reciprocal $1/f_3$ is locally Lipschitz.
	
	\emph{Fact 2.}
	If $f_1$ is locally Lipschitz continuous on a set $I_1$ and $f_2$ is locally Lipschitz continuous on a set $I_2$ such that $f_1(I_1)\subset I_2$, then the composition $f_2 \circ f_1$ is locally Lipschitz continuous on $I_1$.
	
	With these facts in mind, define
	\begin{align}
	\omega_1(r)&=\left\{\begin{array}{rl}
	0,& \mbox{if}\;r>0, ~\\
	r,&\mbox{if}\; r\leq 0,
	\end{array} r\in \mathbb{R},\right.\label{LC:omega1}\\
	\omega_2(x)&=L_f h(x) + \alpha(h(x)),\;x\in\D,\nonumber\\
	\omega_3(x)&=-\frac{L_gh(x)^\top }{L_gh(x)L_gh(x)^\top },\; x\in\D.\nonumber
	\end{align}
	
	The function $\omega_1(r)$ is clearly Lipschitz continuous. Because $f$ and $g$ are locally Lipschitz continuous and the derivative of $h$ is  locally Lipschitz continuous, both $L_fh$ and $L_gh$ are locally Lipschitz continuous on $\D$ by \emph{Fact 1}. The same fact implies that $\omega_2$ and $L_gh L_gh^\top$ are locally Lipschitz continuous on $\D$. Furthermore, because $L_gh(x)\neq 0$ for $x\in\D$, it follows that $L_gh(x) L_gh(x)^\top \neq 0$ and thus $\omega_3(x)$ is also locally Lipschitz continuous by \emph{Fact 1}.
	
	The proof is completed by noting that
	\begin{align*}
	u^\ast(x)=\omega_1(\omega_2(x))\omega_3(x),\;x\in\D.
	\end{align*}
	Because $\omega_1(\omega_2(x))$ is locally Lipschitz continuous with respect to $x\in\D$ by \emph{Fact 2}, its product with $\omega_3(x)$ is locally Lipschitz continuous by \emph{Fact 1}, and thus $u^\ast(x)$ is locally Lipschitz continuous with respect to $x \in \D$.
	\hfill$\Box$
\end{proof}

\begin{remark}\label{rem:quadratic}
	If the objective function of $\mathcal{P}_1(x)$ is changed to $\frac{1}{2}u^\top Hu+F^\top u$, where $H$ is an $m\times m$ positive definite matrix and $F$ is an $m\times 1$ column vector, then the solution of the modified QP is also locally Lipschitz continuous with respect to $x\in\D$.
\end{remark}

\subsection{Quadratic Program Incorporating both Control Barrier and Lyapunov Constraints}
Suppose now that the desired performance of the system \eqref{eqn:controlsys} can be captured by  a CLF $V$,  as in \cite{AaronCLFHyZerTAC14,aaroncbfcdc14}.
This yields the set of control inputs that stabilize the system \eqref{eqn:controlsys}, namely
\begin{align}\label{ineq:ESCLF}
\Kclf(x)=\lbrace u\in \R^m:L_fV(x)+L_gV(x)u+cV(x)<0 \rbrace,
\end{align}
%
The minimum-norm controller of Freeman and Kokotovic chooses pointwise in $x$ the element of $\Kclf(x)$ that minimizes the Euclidean norm. This is now combined with the control barrier function inequality.

In particular, given a CLF $V$ and a ZCBF $h$ with relative degree 1 in $\D$, the two ``specifications'' are  combined via the following parameterized quadratic program
\begin{align}
\mathcal{P}_2(x):&\quad \forall~x\in\D,~ \nonumber \\
&\quad \ubar^*(x)=  \underset{\ubar = \left[ u^\top,\delta \right]^\top\in\R^{m+1}}{\operatorname{argmin}}
\ubar^\top  \ubar \nonumber\\
&\quad \mathrm{s.t.}  \;  L_g V(x) u +L_f V(x) +cV(x)- \delta\leq 0,  \label{QP:ESCLF}\\
&\quad \quad\;\; L_g h(x) u+L_f h(x) +\alpha(h(x))   \geq 0 ,\label{QP:ZCBF}
\end{align}
where $c$ is a positive constant, $u\in\R^m$ is the control input, {\color{black}$\delta$} is a relaxation parameter\footnote{A weight is traditionally used on the relaxation parameter. This is taken care of after the proof of the main result.}, constraint \eqref{QP:ZCBF} is the ZCBF condition and constraint \eqref{QP:ESCLF} is the CLF condition.

\begin{remark}
	The QP $\mathcal{P}_2(x)$ is always feasible, because $L_gh\neq 0$ ensures that there exists $u$ such that \eqref{QP:ZCBF} holds, which implies that the safety guarantee can always be satisfied, while the relaxation parameter $\delta$  ensures that \eqref{QP:ESCLF} can always be satisfied. Due to the relaxation parameter, the performance objective, such as asymptotic stabilization to an equilibrium point, may not necessarily be achieved.
	When the control objective and the safety guarantee are not conflicting---\textit{and a weight is appropriately added to the objective function}---the solution will result in $\delta \approx 0$. Indeed, if the objective function is $u^\top u+k^2\delta^2$ with $k\neq 0$ the weight for $\delta$, and $\hat\ubar=(\hat u^\top,0)^\top$ is a feasible point for constraints \eqref{QP:ESCLF} and \eqref{QP:ZCBF}, then the optimal solution $\ubar^*=({u^*}^\top,\delta^*)^\top$ satisfies ${u^*}^\top{u^*}+k^2{\delta^*}^2\leq {\hat u}^\top\hat u$, which implies that ${\delta^*}^2\leq {\hat u}^\top\hat u/k^2$. Therefore, $\delta^*$ can be made arbitrarily small if sufficiently large weight $k$ is chosen.
\end{remark}


The following theorem is the main result of this subsection.

\begin{theorem}\label{theorem:LocLip}
	Let $V$ be a CLF for the control system \eqref{eqn:controlsys} with the derivative of $V$ locally Lipschitz continuous.  Assume that the vector fields $f$ and $g$ in the control system \eqref{eqn:controlsys} are both locally Lipschitz continuous and that $h:\D \to \R$ is a locally Lipschitz continuous ZCBF. Suppose furthermore that the relative degree one condition, $L_g h(x) \neq 0$ for all $x \in \D$, holds.
	Then the solution, $\ubar^*(x)$, of $\mathcal{P}_2(x)$ is locally Lipschitz continuous for $x \in \D$.
\end{theorem}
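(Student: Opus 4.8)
The plan is to follow the strategy used in the proof of Theorem~\ref{prop:QPLip}: exploit strong convexity of the objective to get a unique minimizer for which the KKT conditions are necessary and sufficient, verify a constraint qualification that holds \emph{uniformly} on $\D$, solve the KKT system in closed form over the possible active sets, and argue that the resulting piecewise expression is locally Lipschitz. The objective $\ubar^\top\ubar$ is strongly convex with Hessian $2I_{m+1}$ and both constraints \eqref{QP:ESCLF} and \eqref{QP:ZCBF} are affine in $\ubar=[u^\top,\delta]^\top$, so $\ubar^*(x)$ is the unique optimizer and, as on pg.~244 of \cite{boyd2004convex}, the KKT conditions are necessary and sufficient.

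First I would check the constraint qualification. Differentiating the two constraints with respect to $\ubar$ gives the gradients $[L_gV(x)^\top;\,-1]$ and $[-L_gh(x)^\top;\,0]$. The first has the nonzero entry $-1$ in the $\delta$-component while the second has $0$ there, and $L_gh(x)\neq 0$ on $\D$; hence these two vectors are linearly independent whenever both constraints are active. Thus the linear independence constraint qualification holds \emph{everywhere} on $\D$, and unique multipliers $\lambda_1(x),\lambda_2(x)\ge 0$ exist. Stationarity then gives $u^*(x)=\tfrac12\big(\lambda_2\,L_gh(x)^\top-\lambda_1\,L_gV(x)^\top\big)$ and $\delta^*(x)=\tfrac12\lambda_1(x)$, so the problem reduces to solving for the multipliers over the four possible active sets.

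The decisive ingredient absent from Theorem~\ref{prop:QPLip} appears when both constraints are active. Substituting the stationarity expressions into the two active equalities yields a $2\times 2$ linear system in $(\lambda_1,\lambda_2)$ whose determinant equals
\[
\big(L_gV\,L_gh^\top\big)^2-\big(L_gV\,L_gV^\top+1\big)\big(L_gh\,L_gh^\top\big).
\]
By the Cauchy--Schwarz inequality this is bounded above by $-\,L_gh\,L_gh^\top=-\|L_gh\|^2<0$, so it never vanishes and, on any compact neighborhood in $\D$, stays bounded away from zero. Consequently $1/\det$ is locally Lipschitz by \emph{Fact~1}, and Cramer's rule expresses $\lambda_1,\lambda_2$---and hence $u^*,\delta^*$---as locally Lipschitz combinations of $L_fh,L_gh,L_fV,L_gV,h,V$ and $\alpha\circ h$, using \emph{Facts~1} and~2. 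The other active sets reproduce either the closed form of $\mathcal{P}_1$ (only the ZCBF constraint active), the analogous formula $u^*=-\tfrac{L_fV+cV}{1+L_gV\,L_gV^\top}L_gV^\top$ with $\delta^*=\tfrac{L_fV+cV}{1+L_gV\,L_gV^\top}$ (only the CLF constraint active), or $\ubar^*=0$; each is locally Lipschitz by the same facts, the nonvanishing denominators being guaranteed by $L_gh\neq0$ and by $1+L_gV\,L_gV^\top\ge1$.

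The main obstacle is the gluing: local Lipschitz continuity must hold \emph{across} the boundaries where the active set changes, not merely within each region. I would resolve this in the cleanest way by eliminating the relaxation variable first: for fixed $u$ the optimal $\delta$ is $\delta^*(u)=\max\{0,\,L_gV\,u+L_fV+cV\}$, so $\mathcal{P}_2(x)$ collapses to the single-ZCBF-constraint strongly convex program $\min_u\|u\|^2+\big(\max\{0,L_gV\,u+L_fV+cV\}\big)^2$ subject to \eqref{QP:ZCBF}. Its solution then admits a representation of exactly the $\omega_1(\omega_2(x))\,\omega_3(x)$ type used in Theorem~\ref{prop:QPLip}, with the Lipschitz clipping function $\max\{0,\cdot\}$ absorbing the active/inactive transitions, so that Lipschitzness across boundaries becomes transparent. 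Alternatively, working directly with the four branches, one notes that $\ubar^*$ is continuous (uniqueness of the optimizer yields continuity of the solution map) and locally Lipschitz on each branch with a common local constant; restricting to a small convex neighborhood and telescoping the branch-wise bounds along the segment joining two points gives the estimate with constant equal to the maximum of the branch constants. In either route the essential enablers are the uniform LICQ and the uniform lower bound $\|L_gh\|^2>0$ on the pertinent determinant, which keep the multiplier formulas well-conditioned up to the active-set boundaries.
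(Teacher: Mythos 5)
Your proposal is correct and takes essentially the same route as the paper's proof: the paper invokes Luenberger's minimum-norm formulation, whose Gram matrix $G(x)$ built from $y_1=[L_gV,-1]^\top$ and $y_2=[L_gh,0]^\top$ is exactly the coefficient matrix of your KKT system (its positive definiteness, i.e.\ $\det G(x)>0$, is the same fact as your Cauchy--Schwarz bound showing the determinant is at most $-\|L_gh\|^2$), and the paper then writes the multipliers in piecewise closed form with the clipping function $\omega$ and glues the branches by appealing to uniqueness of the solution of the complementarity system---precisely your second gluing route. The one caveat is your first gluing route: eliminating $\delta$ leaves a piecewise-quadratic (not quadratic) objective whose constrained minimizer still requires a nested case analysis, so the claim that it admits ``exactly'' the $\omega_1(\omega_2(x))\,\omega_3(x)$ form of Theorem~\ref{prop:QPLip} is not substantiated; this is harmless, however, since your alternative argument (continuity of the unique optimizer plus branch-wise Lipschitz bounds) carries the step and is the one the paper itself uses.
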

\begin{proof}

	The proof is based on \cite{LuenOptiBook}(Chapter~3), which as a special case includes minimization of a quadratic cost function subject to affine inequality constraints.
	
	Define
	\begin{align*}
		& y_1(\xbar)=[L_g V(\xbar),-1]^\top,\;p_1(\xbar)= -L_f V(\xbar)-cV(x),\\
		& y_2(\xbar)=[L_g h(\xbar),0]^\top,\;p_2(\xbar)=-L_f h(\xbar) +\alpha(h(\xbar)),
	\end{align*}
	and note that for all $\xbar\in \mathcal{D}$,  $y_1(\xbar)$ and $y_2(\xbar)$ are linearly independent in $\mathbb{R}^{m+1}$.
	

	The optimization problem $\mathcal{P}_2(x)$  is then equivalent to
	\begin{align}
	\ubar^*(x)&=  \underset{\ubar = \left[ u^\top,\delta \right]^\top\in\R^{m+1}}{\operatorname{argmin}}
		\ubar^\top  \ubar \label{eqn:QPCLFCBF:equivalent}\\
		\mathrm{s.t.} &  \left<{y}_1(\xbar),  \ubar \right>  \le {p}_1(\xbar), \nonumber \\
		& \left<{y}_2(\xbar),  \ubar \right> \le {p}_2(\xbar). \nonumber
	\end{align}
	
	From  \cite{LuenOptiBook}(Chapter~3), the solution to \eqref{eqn:QPCLFCBF:equivalent} is computed as follows. Let $G(\xbar)=[G_{ij}(\xbar)]=[\langle {y}_i(\xbar), {y}_j(\xbar)\rangle]$, $i,j=1,2$ be the Gram matrix.  Due to the linear independence of $\{{y}_1(\xbar), {y}_2(\xbar)\}$,  $G(\xbar)$ is positive definite. The unique solution to \eqref{eqn:QPCLFCBF:equivalent} is
	\begin{equation}
		\label{eqn:Lambdas}
		\ubold^*(\xbar) = \lambda_1(\xbar)  {y}_1(\xbar) +  \lambda_2(\xbar)  {y}_2(\xbar) ,
	\end{equation}
	where $\lambda(\xbar)=[\lambda_1(\xbar), \lambda_2(\xbar)]^\top$ is the unique solution to
	\begin{align}
		\label{eqn:LuenbrgerFormulation}
		G(\xbar) \lambda(\xbar) & \le {p}(\xbar), \nonumber \\
		\lambda(\xbar) & \le 0,\\
		[G(\xbar) \lambda(\xbar)]_i & < {p}_i(\xbar) ~ \Rightarrow~\lambda_i(\xbar)=0, \nonumber
	\end{align}
	where $[\cdot]_i$ denotes the $i$-th row of the quantity in brackets,  $p(\xbar)=[p_1(\xbar),p_2(\xbar)]^\top$, and the inequalities hold componentwise.
	Because $G(\xbar) $ is $2 \times 2$, a closed form solution can be given. Define the Lipschitz continuous function
	$$
	\omega(r)=\left\{\begin{array}{rl}
	0,& \mbox{if}\;r>0, ~\\
	r,&\mbox{if}\; r\leq 0.
	\end{array} r\in \mathbb{R}. \right.
	$$
	For $\xbar\in\mathcal{D}$,
	$\lambda_1 ,  \lambda_2$ can be expressed in closed form as\newline
	\noindent \textbf{If:} $G_{21}(\xbar) \omega ({p}_2(\xbar))-G_{22}(\xbar){p}_1(\xbar)<0,$
	\begin{align}
		\label{eqn:IfCondition}
		\left[  \begin{array}{c}
			\lambda_1(\xbar) \\
			\lambda_2(\xbar)
		\end{array} \right] = \left[  \begin{array}{c}
		0\\
		\frac{\omega({p}_2(\xbar))}{G_{22}(\xbar)}
	\end{array} \right],
\end{align}
\noindent \textbf{Else if:} $G_{12}(\xbar) \omega ({p}_1(\xbar))-G_{11}(\xbar){p}_2(\xbar)<0,$
\begin{align}
	\label{eqn:ElseIfCondition}
	\left[  \begin{array}{c}
		\lambda_1(\xbar) \\
		\lambda_2(\xbar)
	\end{array} \right] = \left[  \begin{array}{c}
	\frac{\omega({p}_1(\xbar))}{G_{11}(\xbar)} \\
	0
\end{array} \right],
\end{align}

\noindent \textbf{Otherwise:}
\begin{align}
	\label{eqn:OtherwiseCondition}
	\left[  \begin{array}{c}
		\lambda_1(\xbar) \medskip \\
		\lambda_2(\xbar)
	\end{array} \right] = \left[  \begin{array}{c}
	\frac{\omega(G_{22}(\xbar){p}_1)(\xbar)-G_{21}(\xbar){p}_2(\xbar))}
	{G_{11}(\xbar)G_{22}(\xbar)-G_{12}(\xbar)G_{21}(\xbar)} \medskip \\
	\frac{\omega(G_{11}(\xbar){p}_2(\xbar)-G_{12}(\xbar){p}_1(\xbar))}
	{G_{11}(\xbar)G_{22}(\xbar)-G_{12}(\xbar)G_{21}(\xbar)}
\end{array} \right].
\end{align}

Because the Gram matrix is positive definite, for all $\xbar\in\mathcal{D}$, $G_{11}(\xbar)G_{22}(\xbar)-G_{12}(\xbar)G_{21}(\xbar)>0$. Using standard properties for the composition and product of locally Lipschitz continuous functions, each of the expressions in \eqref{eqn:IfCondition} -\eqref{eqn:OtherwiseCondition} is locally Lipschitz continuous on $\mathcal{D}$. Hence, the functions $\lambda_1(\xbar)$ and $\lambda_2(\xbar)$ are locally Lipschitz on each domain of definition and have well defined limits on the boundaries of their domains of definition relative to $\mathcal{D}$. If these limits agree at any point $\xbar$ that is common to more than one boundary, then $\lambda_1(\xbar)$ and $\lambda_2(\xbar)$ are locally Lipshitz continuous on $\mathcal{D}$. However, the limits are solutions to  \eqref{eqn:LuenbrgerFormulation}, and solutions to \eqref{eqn:LuenbrgerFormulation} are unique  \cite{LuenOptiBook}. Hence the limits agree at common points of their boundary\footnote{As an example, the only non-zero solutions of \eqref{eqn:LuenbrgerFormulation} occur when $p_2(\xbar)<0$, in which case,
	$G_{21}(\xbar)p_2(\xbar)-G_{22}(\xbar)p_1(\xbar)=0,$
	and  therefore \eqref{eqn:OtherwiseCondition} reduces to \eqref{eqn:IfCondition}. The other cases are similar.} (relative to $\mathcal{D}$) and the proof is complete.	
\end{proof}

\begin{remark}\label{rem:objective}
	If the objective function of $\mathcal{P}_2(x)$ is changed to $\frac{1}{2}\ubar^\top H\ubar+F^\top \ubar$ with $H$ an $(m+1)\times (m+1)$ positive definite matrix and $F$ an $(m+1)\times 1$ a column vector, then the modified QP is also locally Lipschitz continuous with respect to $x\in\D$.
\end{remark}

\section{Example}\label{sec:example}

In this section, the theoretical results of the paper are illustrated on adaptive cruise control (ACC). The \emph{lead} and \emph{following} vehicles are modeled as point-masses moving on a straight road with uncertain slope or grade  (\cite{ioannou1993autonomous}, \cite{astrom2010feedback}). The following vehicle is equipped with ACC, while the lead vehicle and the road act as disturbances to the following vehicle's performance objective of cruising at a given constant speed. The safety constraint is to maintain a safe following distance as specified by a time headway.

Let $v_l$ and $v_f$ be the velocity (in $m/s$) of the lead car and the following car, respectively, and $D$ be the distance (in $m$)  between the two vehicles. Let $x=(v_l,v_f,D)$  be the state of the system, whose dynamics can be described as
\begin{align}
\label{eqn:fgdynamics}
\left[\begin{array}{c}
\dot{v}_l\\
\dot{v}_f \smallskip\\
\dot{D}
\end{array}\right] &=\underbrace{
	\left[ \begin{array}{c}
	a_l \\
	-F_r/m\\
	v_l-v_f\end{array}\right]}_{f(x)}+\underbrace{
	\left[ \begin{array}{c}
	0\\
	g\Delta\theta\\
	0\end{array}\right]}_{\Delta f(x)}
+\underbrace{\left[ \begin{array}{c} 0 \\ 1/m\\0\end{array}\right]}_{\hat g(x)}u,
\end{align}
where $u$ and $m$ are the control input (in Newtons) and the mass (in $kg$) of the following car, respectively, $g$ is the gravitational constant (in $m^2/s$), $a_l$ is the acceleration (in $m^2/s$) of the lead car, $\Delta\theta$ is a perturbation to  $\dot{v}_f$ (reflecting unmodeled road grade or aerodynamic force), and $F_r=f_0 + f_1 v_f + f_2 v_f^2$ is the aerodynamic drag term (in Newtons) with constants $f_0$, $f_1$ and $f_2$ determined empirically. The values of $m$, $f_0$, $f_1$, and $f_2$ are the same as those in \cite{aaroncbfcdc14}.

Two constraints are imposed on the following car. The \emph{hard} constraint requires the following car to keep a safe distance from the lead car, which can be expressed as $ D/v_f \geq \tau_{des}$ with $\tau_{des}$ the desired time headway. Define the function $h=D-\tau_{des}v_f$, by which the hard constraint can be expressed as $h\geq 0$ and the set $\C$ can be defined by \eqref{eqn:superlevelsetC}-\eqref{eqn:superlevelsetC3}.  The \emph{soft} constraint requires that when adequate headway is assured, the following car achieves a desired speed $v_d$, which can be expressed as $v_f- v_d \to 0$, leading to the candidate CLF,  $V=(v_f-v_d)^2$.


The controller is designed on the basis of the nominal model  $\dot{x}=f(x)+\hat g(x)u$ corresponding to  $\Delta f(x)=0$.  The hard constraint is encoded by the ZCBF condition \eqref{ineq:ZCBF} and the soft  constraint by the CLF condition \eqref{ineq:ESCLF}. The headway is selected as $\tau_{des}=1.8$ following
the ``half the speedometer rule'' (\cite{VogelHeadway03}).
The feedback controller $u(x)$ can then be obtained by the following QP
\begin{align*}
\ubar^*(x) &= \underset{\ubar = \left[ u, \delta\right]^\top \in  \R^2}{\operatorname{argmin}} \;  \frac{1}{2}
\ubar^T H \ubar + F^T \ubar \\
&\quad\mathrm{s.t.}\; A_{\mathrm{clf}} \ubar \leq b_{\mathrm{clf}}, \\
& \quad \quad\;\; A_{\mathrm{zcbf}}  \ubar \leq b_{\mathrm{zcbf}},
\end{align*}
where
\begin{align*}
H=2 \left[ \begin{array}{cc} 1/m^2 & 0 \\ 0 & p_{sc}  \end{array} \right] , \;
F=  -2 \left[ \begin{array}{c} F_r/m^2  \\ 0    \end{array} \right],
\end{align*}
as given in \cite{aaroncbfcdc14} with $p_{sc}$ the weight for $\delta$,
\begin{align*}
A_{\mathrm{clf}} &= \left[ \frac{2(v_f - v_d)}{m}, \; -1\right],\\
b_{\mathrm{clf}}&= \frac{2(v_f-v_d)}{m}F_r-(v_f -v_d)^2,
\end{align*}
and
\begin{align*}
A_{\mathrm{zcbf}}&=\left[-\frac{1.8}{m},\;  0\right],\\
b_{\mathrm{zcbf}}&=- \frac{1.8F_r}{m}-(v_l-v_f)+\alpha(h(x)).
\end{align*}


According to Proposition \ref{Prop:SetStability}, $V_\C$ defined in \eqref{Eq:V} equals to {$1.8v_f-D$} for points outside $\C$ and equals to $0$ for points inside $\C$. In the absence of perturbations, the input $u$ arising from solutions of the QP  ensures
$$
L_{f+\hat gu}V_\C\leq -\kappa V_\C,
$$
where the corresponding extended class $\Kinfinity$ function $\alpha$ is simply chosen as $\alpha(h)=\kappa h$ for some constant $\kappa>0$. For the perturbed system \eqref{eqn:fgdynamics}, the same input $u$ ensures
$$
L_{f+\hat gu+\Delta f}V_\C\leq -\kappa V_\C+L_{\Delta f}V_\C=-\kappa V_\C+1.8g\Delta\theta.
$$

By choosing the class $\mathcal{K}$ function $\gamma$ in Proposition \ref{Prop:Robust} as $\gamma(z)=\frac{1.8g}{\kappa}z$, the set $\mathcal{C}_{\gamma\left(\Vert \Delta\theta\Vert_\infty\right)}$ is asymptotically stable. Indeed, if $x\notin\mathcal{C}_{\gamma\left(\Vert \Delta \theta\Vert_\infty\right)}$, then $h(x)=D-1.8v_f<-\frac{1.8g}{\kappa}\Vert \Delta\theta\Vert_\infty$ and therefore,
\begin{align*}
L_{f+\hat gu+\Delta f}V_\C&\leq -\kappa V_\C+1.8g \Vert \Delta\theta\Vert_\infty\\
&=\kappa (D-1.8v_f)+1.8g \Vert \Delta\theta\Vert_\infty\\
&<-\kappa \frac{1.8g}{\kappa}\Vert \Delta\theta\Vert_\infty+1.8g\Vert \Delta\theta\Vert_\infty\\
&=0.
\end{align*}
Thus, for any $x\in\R^3\backslash \mathcal{C}_{\gamma\left(\Vert \Delta\theta\Vert_\infty\right)}$, $L_{f+gu+\Delta f}V_\C(x)<0$, which implies that the set $\mathcal{C}_{\gamma\left(\Vert \Delta\theta\Vert_\infty\right)}$ is asymptotically stable.

\begin{figure}[!thb]
	\centering
	\includegraphics[width=6cm]{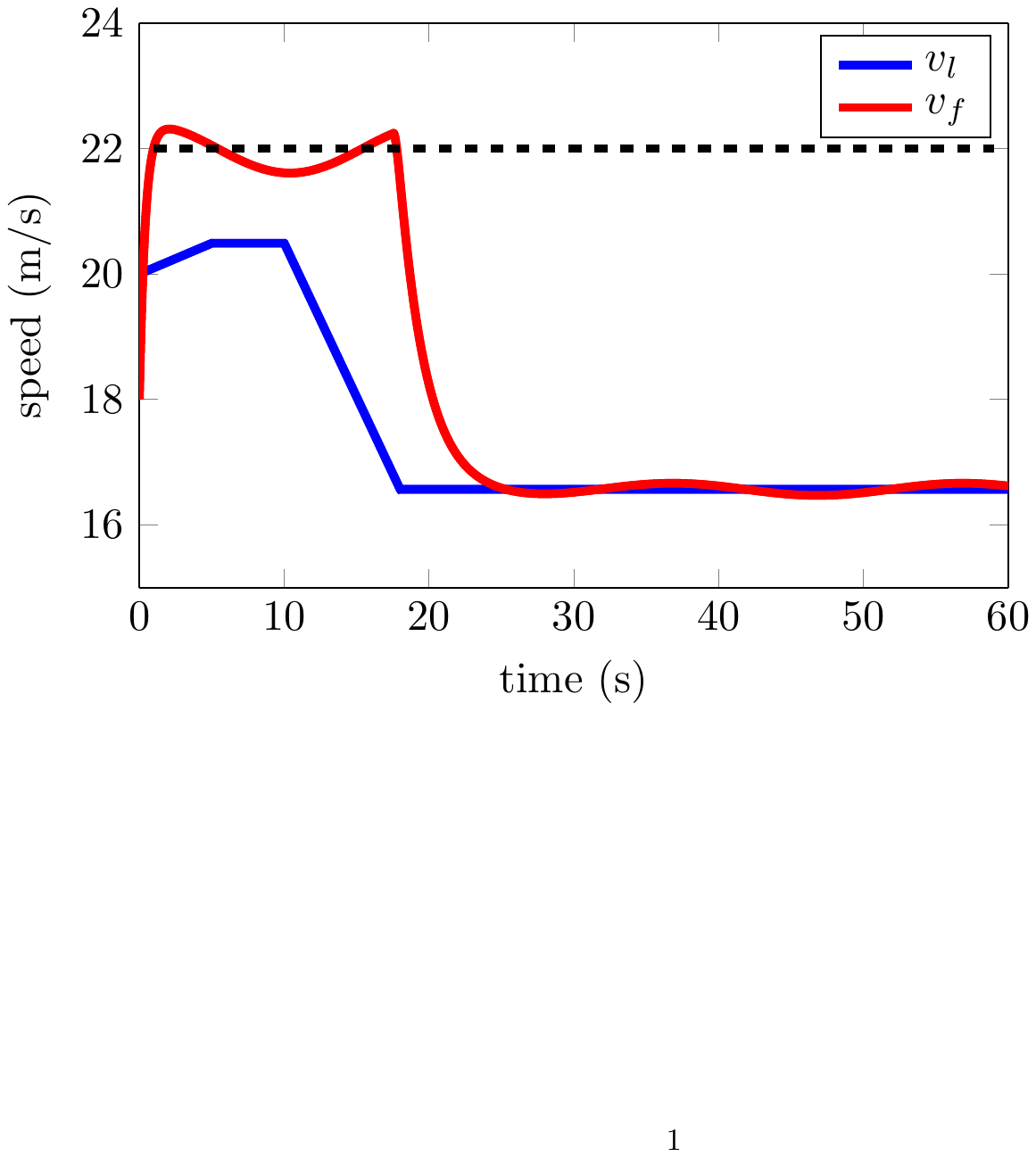}\\
	\includegraphics[width=6cm]{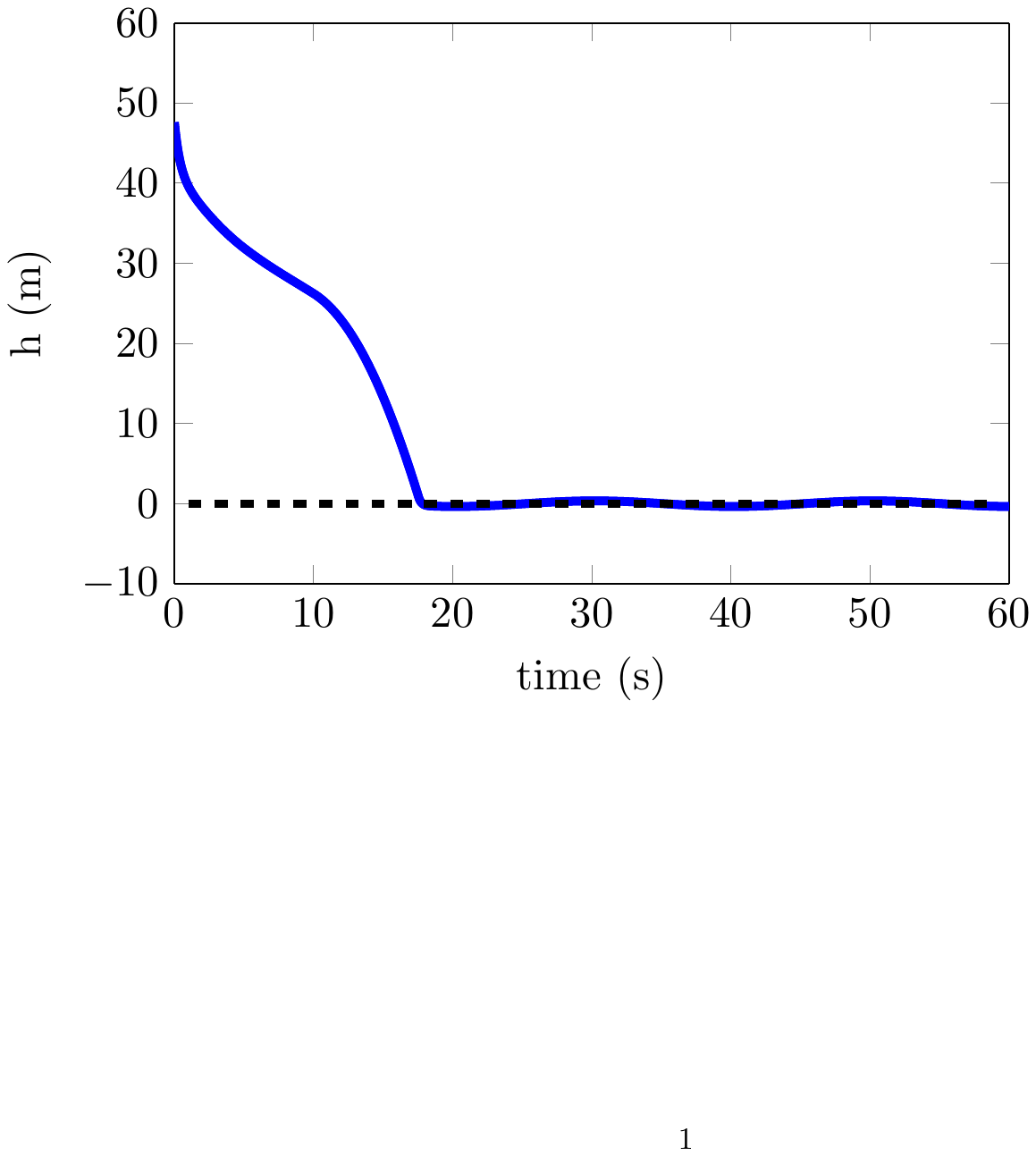}\\
	\includegraphics[width=6cm]{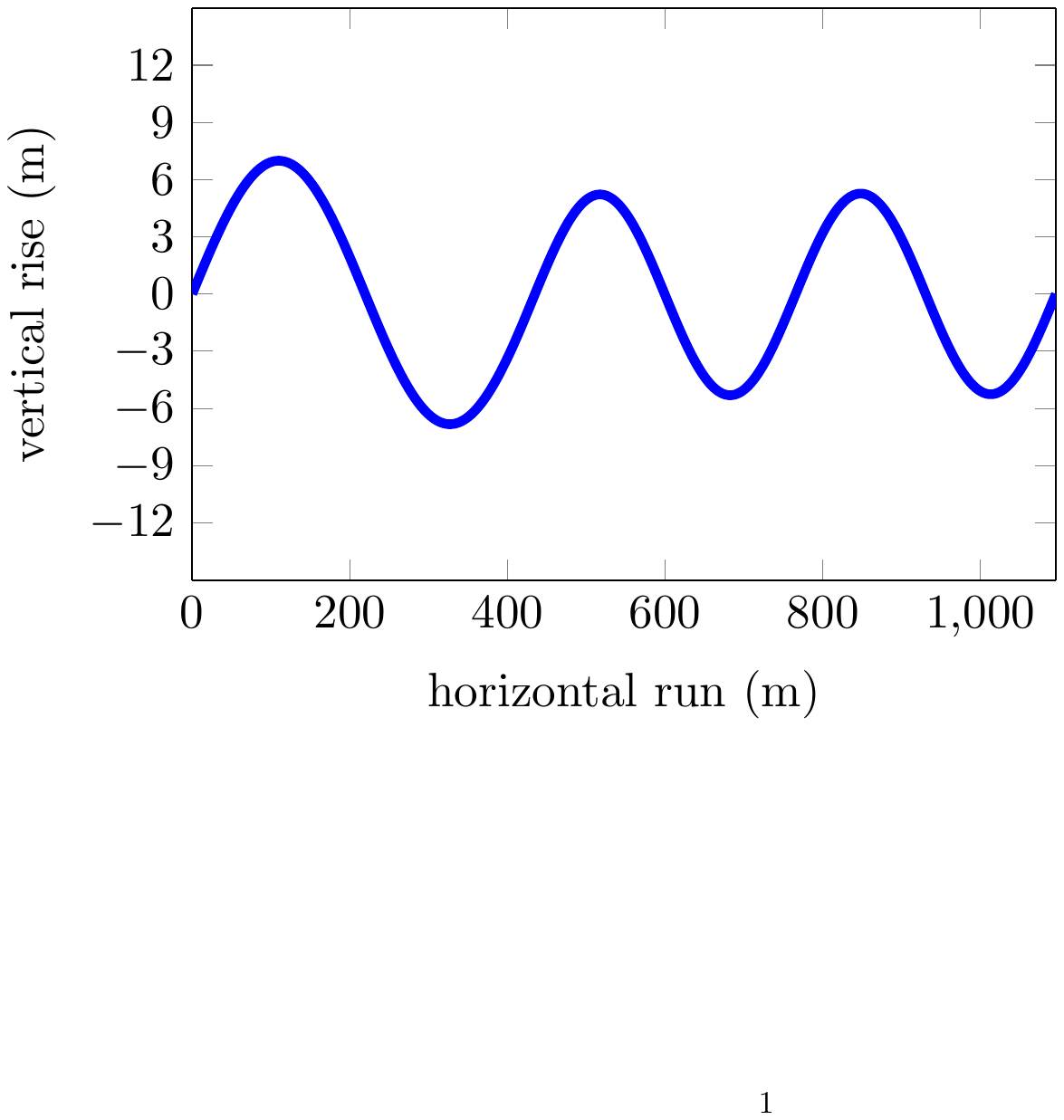}\\
	\caption{Simulation results when choosing $\kappa=5$, $\|\Delta \theta\|_{\infty}=0.1$ and initial states $v_l(0)=20$, $v_f(0)=18$, $D(0)=80$. (top) speed of the two cars; (middle) evolution of $h=D-1.8v_f$; (bottom) the vertical rise of the road with respect to the horizontal run of the car.}\label{fig:casestudy}
\end{figure}

\begin{figure}[!thb]
	\centering
	\includegraphics[width=6cm]{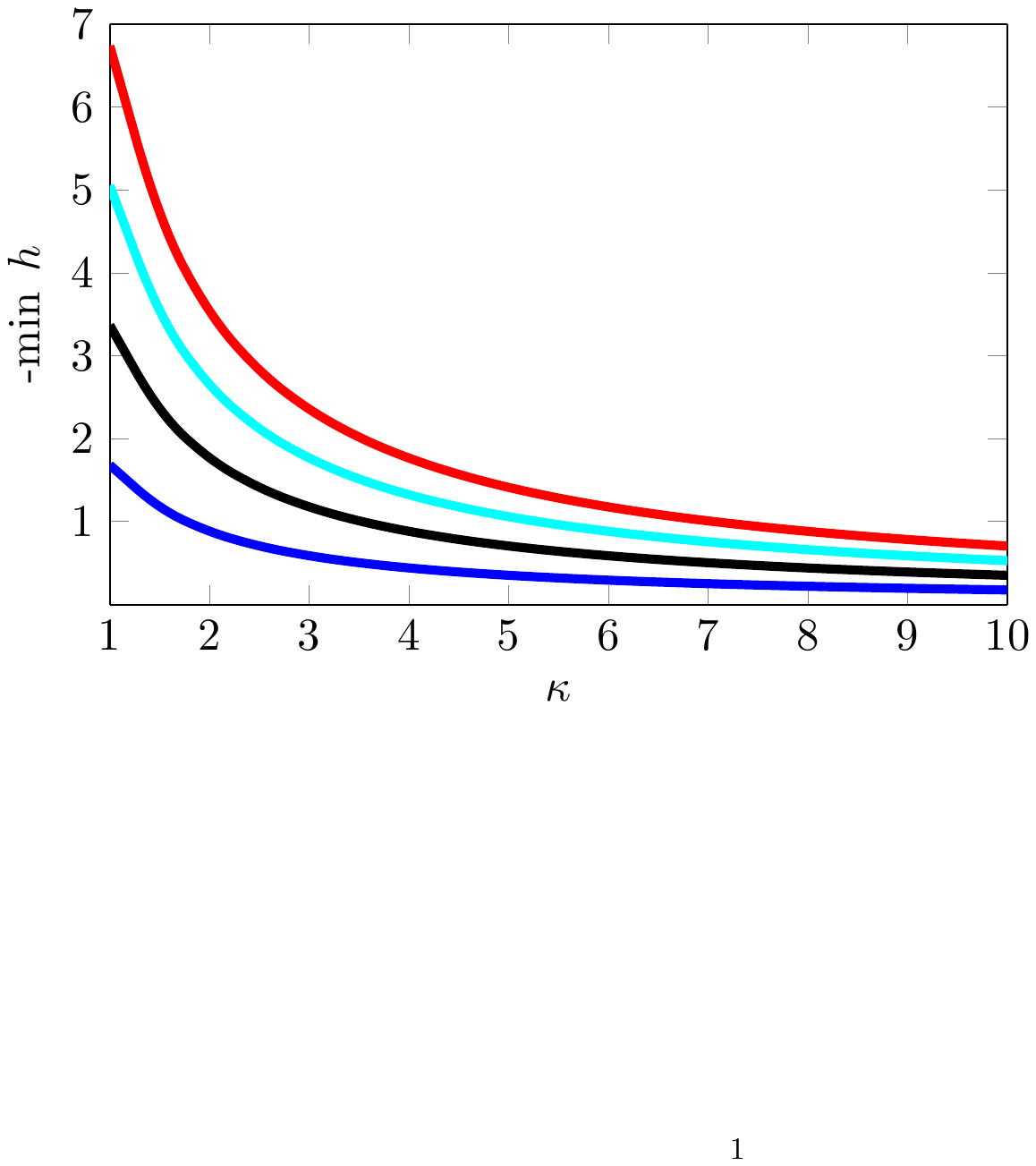}\\
	\includegraphics[width=6cm]{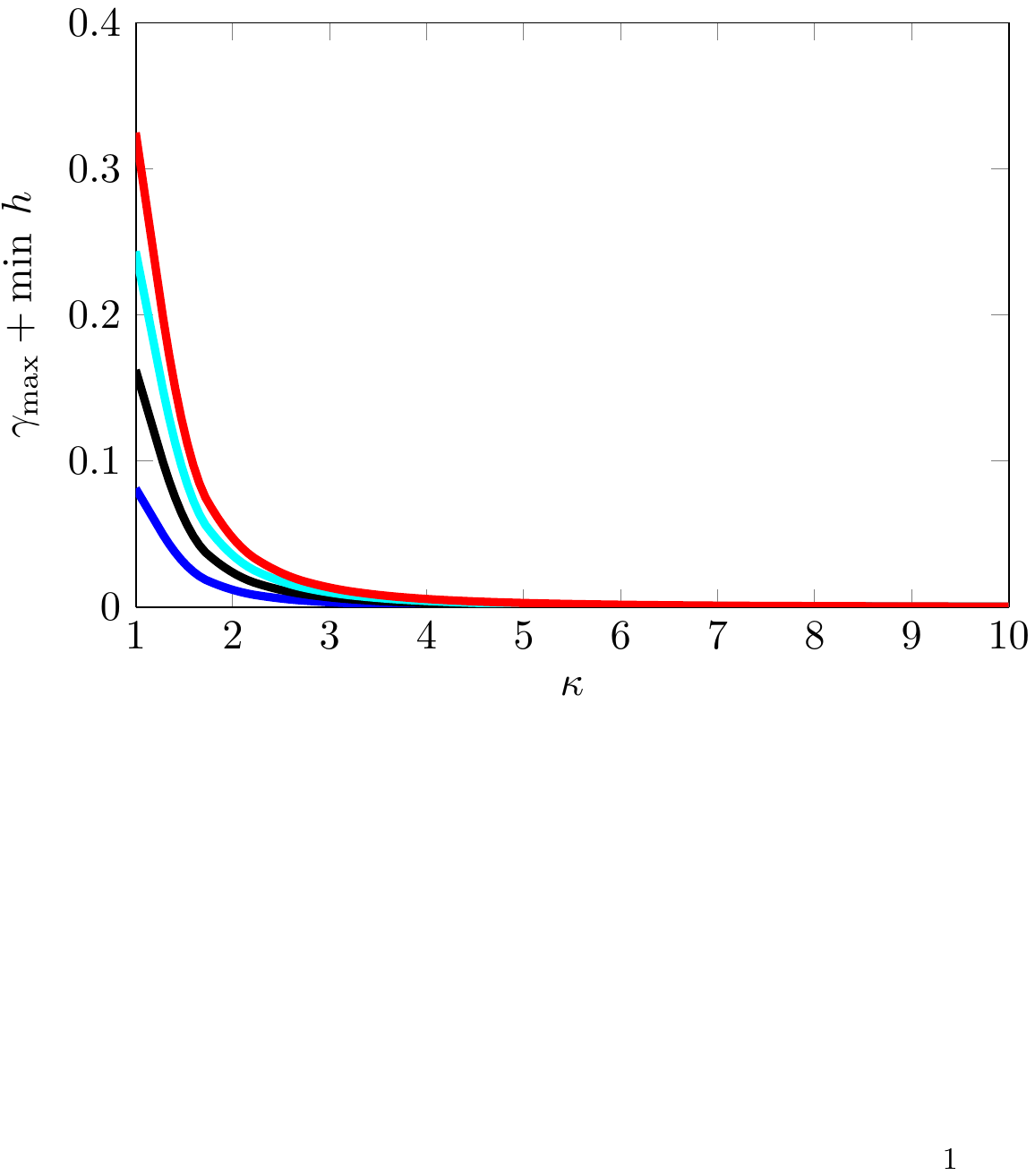}\\
	\includegraphics[width=6cm]{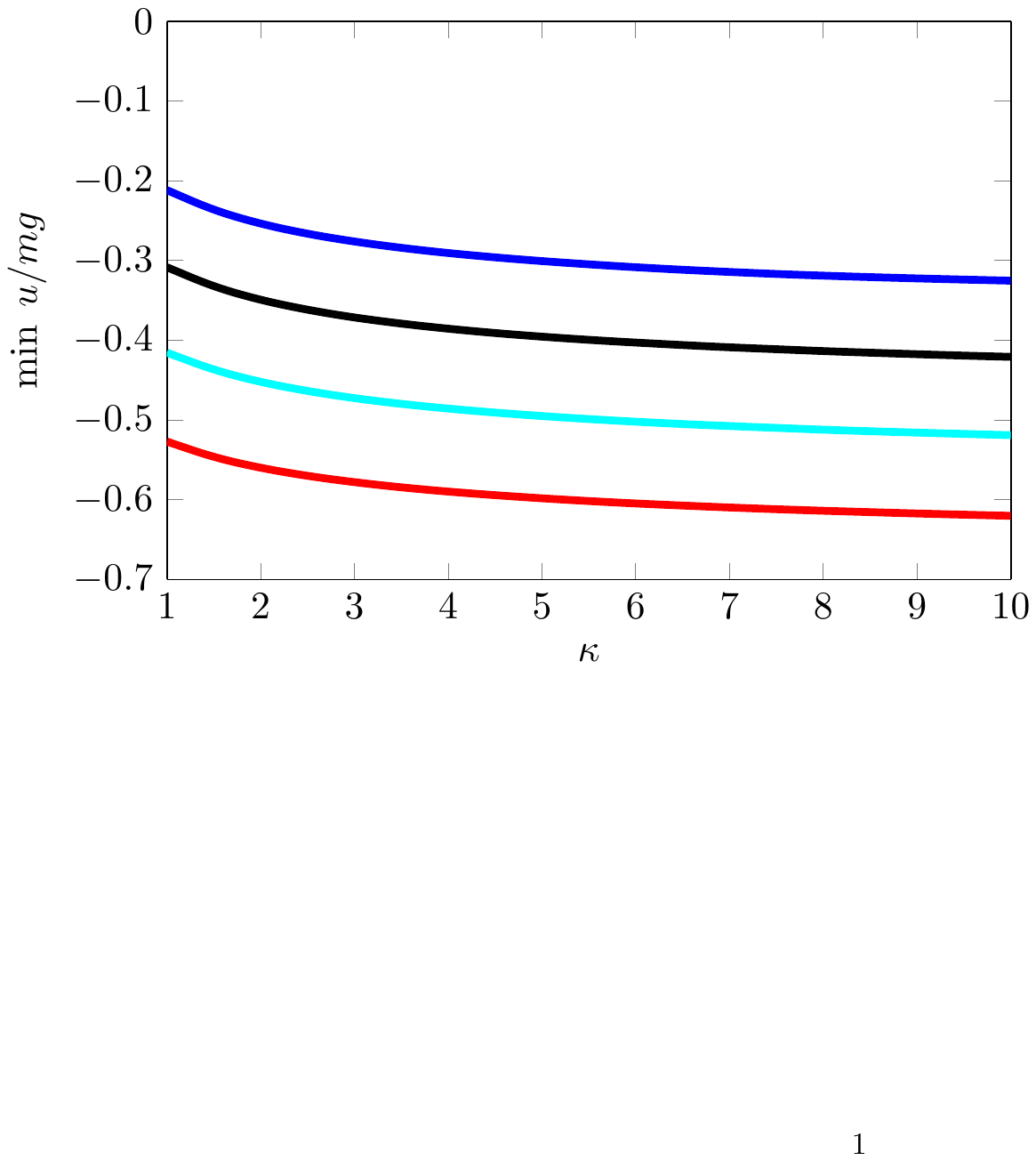}\\
	\includegraphics[width=3cm]{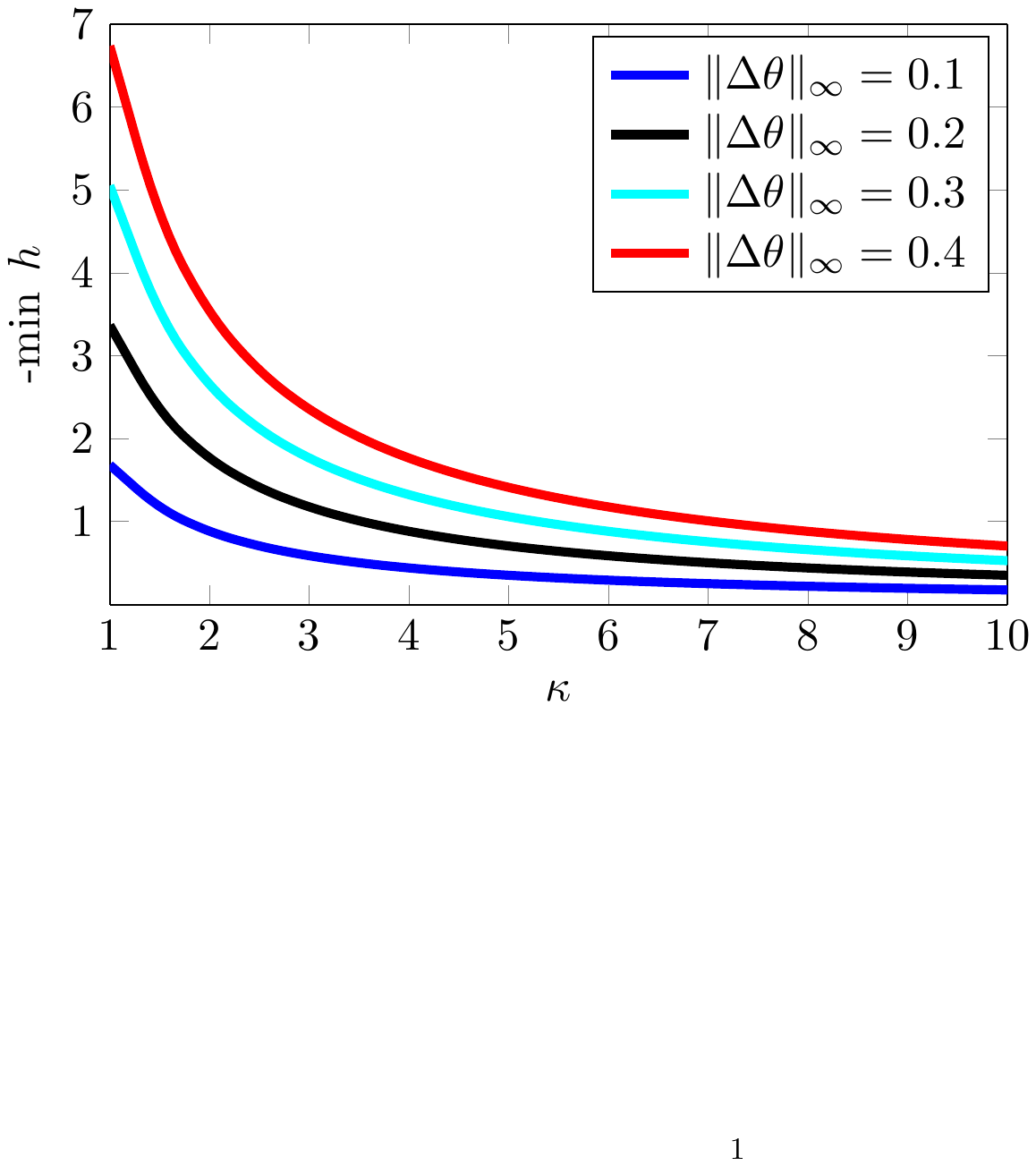}
	\includegraphics[width=3cm]{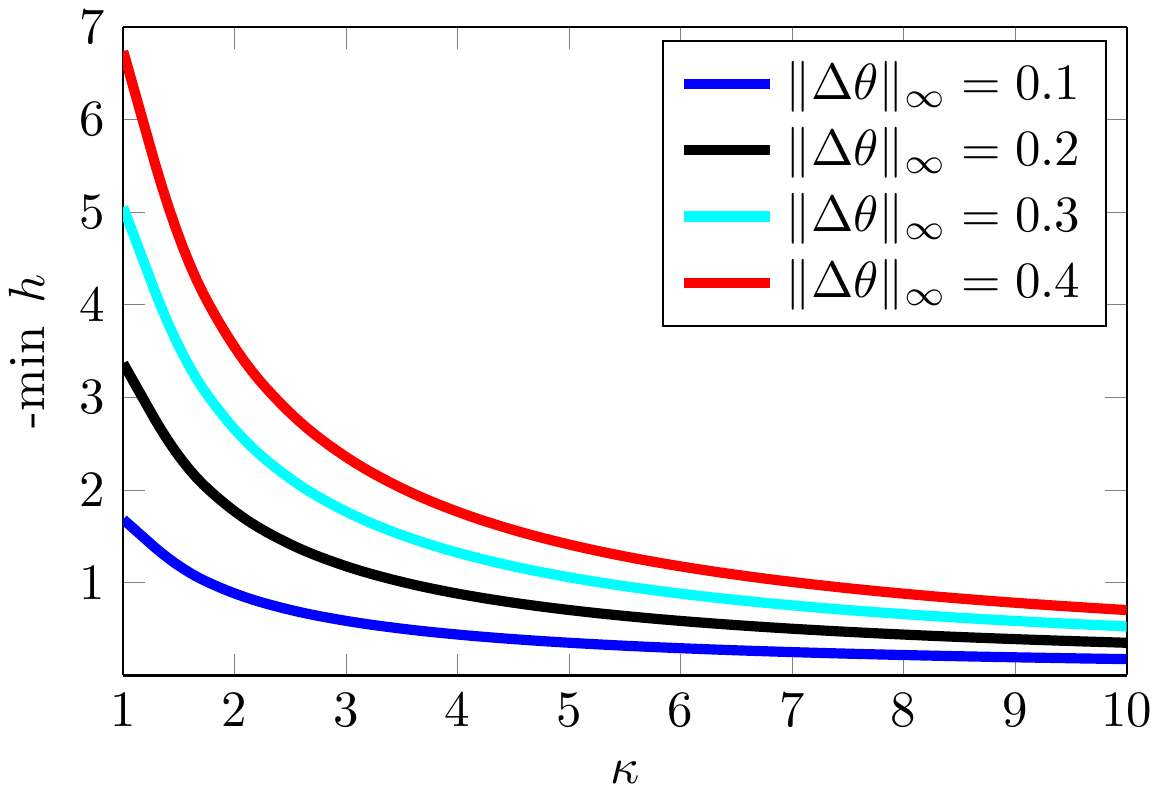}\\
	\caption{Tradeoff analysis in terms of road grade uncertainty and speed of convergence to the safe set. (top) $-\min h$ increases as $\kappa$ decreases and $\Vert \Delta\theta\Vert_\infty$ increases; (middle) positiveness of the discrepancies of $\gamma_{\max}$ and $\min h$ implies $x$ is within the set $\C_{\gamma_{\max}}$; (bottom) the magnitude of the braking force $u$ increases as $\Vert \Delta\theta\Vert_\infty$ and $\kappa$ increases. According to the Guinness Book of World Records, the steepest street in the world is Baldwin Street in New Zealand, with a grade of 38\%.}
	\label{fig:dischange}
\end{figure}

%

Figure \ref{fig:casestudy} shows the time evolution of $v_l(t)$ and $v_f(t)$, the evolution of the specification $h(x(t))$, the change of the road slope when $\kappa=5$, the
perturbation $\Delta \theta(t)=0.1\cos(2\pi t/20)$, and
the desired speed $v_d=22$. The initial state is $v_l(0)=20$, $v_f(0)=18$, and $D(0)=80$. To simplify the discussion we denote $\gamma\left(\Vert \Delta\theta\Vert_\infty\right)$ by $\gamma_{\max}$ which is $\gamma_{\max}=0.3532$ for $\Vert \Delta\theta\Vert_\infty=0.1$, i.e., the maximum headway distance error is $0.3532$m. The top plot of Fig.~\ref{fig:casestudy} shows that the following car first accelerates to approximately its desired speed $v_d$. The vehicle then decelerates to and maintains the same final speed as the lead car in order to maintain a safe headway. Note that due to the unmeasured perturbation in road grade, the achieved tracking speeds are in a neighborhood of $v_d$ and the lead car's final speed. The middle plot shows that the values of $h$ are greater than $-0.3525$, which implies that $x$ is within the set $\C_{\gamma_{\max}}$. The bottom plot shows the vertical rise of the road with respect to the horizontal run of the car, assuming the perturbation term is exclusively interpreted as the change of road slope.

Figure \ref{fig:dischange} shows the quantities $-\min h$, the amount the safety condition is violated in meters, $\gamma_{\max}+\min h$, the tightness of the error bound in meters, and $\min u/mg$, the braking effort in fractions of $g$,
as $\kappa$ ranges from $1$ to $10$ (rate of convergence back to safe set) and $\Vert \Delta\theta\Vert_\infty$ ranges from 10\% to 40\% (road grade perturbation), where
\begin{align*}
\min h&:=\min_{0\le t \le 60}~h(x(t)) \\
\min u&:=\min_{0\le t \le 60}~u(t).
\end{align*} 
Note that larger $\kappa$ means a stricter barrier function condition, while larger $\Vert \Delta\theta\Vert_\infty$ means more uncertainty in the dynamics. The evolution of the road grade perturbation is given by $\Delta \theta(t)=0.1K\cos(2\pi t/20)$ for a constant $K>0$, which implies $\|\Delta \theta\|_{\infty}=0.1K$.
The top plot shows that $-\min h$ increases as $\kappa$ decreases or $\Vert \Delta\theta\Vert_\infty$ increases, which is intuitive because with a weaker barrier function condition or larger perturbations, the specification $h>0$ is more likely to be violated.
The middle plot shows that the discrepancies between $\gamma_{\max}$ and $\min h$ are positive, which implies that $x$ is always within the set $\C_{\gamma_{\max}}$ as Proposition \ref{Prop:SetStability} guarantees. The bottom plot shows that the magnitude of the braking force $u$ increases as $\Vert \Delta\theta\Vert_\infty$ or $\kappa$ increases.




\section{Conclusions}\label{sec:conclusions}
This paper defined (control) zeroing barrier functions for a given set and investigated their robustness properties under model perturbations. In particular, when the barrier function was designed to be negative on the complement of the closure of a safe set, and its derivative along solutions of the model was positive, a Lypaunov analysis showed that the set was automatically locally asymptotically stable. This led to various Input-to-State Stability (ISS) results in the presence of model perturbations. For this result to hold, it was important to consider barrier functions that vanish on the set boundary (i.e., zeroing barrier functions) rather than barrier functions that tend to infinity on the set boundary (i.e. reciprocal barrier functions). The reason is that ``there are two sides of zero'' and only ``one side of infinity.'' More formally speaking, if a perturbation (or model error) makes it impossible to satisfy the invariance condition for a reciprocal barrier function, then the solution of the model must cease to exist because the control input must become unbounded as well; see Sect.~III.B of \cite{aaroncbfcdc14}, eqn. (CBF). On the other hand, if a perturbation (or model error) makes it impossible to satisfy the invariance condition for a zeroing barrier function, then the solution can cross the set boundary without the control input becoming unbounded.

A second result presented conditions that guarantee local Lipschitz continuity of the solution of a Quadratic Program (QP) that  mediates safety (represented as a control barrier function (CBF)) and a control objective (represented as a control Lyapunov function (CLF)). A uniform relative degree condition on the CBF and relaxation of the inequality required for a CLF were shown to provide local Lipschitz continuity of the resulting feedback control law, and hence local existence and uniqueness of solutions of the associated closed-loop system. This result is applicable to both types of barrier functions.

Future studies will consider control zeroing barrier functions with constraints on the inputs, as in \cite{aaroncbfcdc14}. There are many interesting open questions on existence, computation, and composition, as well as applications to systems of greater complexity than ACC.

\bibliographystyle{IEEEtran}
\bibliography{arxiv_adhs_ref}
\end{document}